
\documentclass{amsart}

 \usepackage{latexsym}

 \usepackage{amssymb}

 \usepackage{amsfonts}

 \usepackage{amsmath}

 \usepackage{amsthm}
\usepackage{float}

\usepackage{graphics}

\usepackage[pdftex]{graphicx}

\newtheorem{theorem}{Theorem}[section]
\newtheorem{corollary}[theorem]{Corollary}
\newtheorem{lemma}[theorem]{Lemma}
\newtheorem{proposition}[theorem]{Proposition}
\newtheorem{example}[theorem]{Example}

\newtheorem{remark}[theorem]{Remark}
\newtheorem{definition}[theorem]{Definition}
\newtheorem{problem}[theorem]{Problem}

\newtheorem{con}{Conjecture}

%

\def\Sm0#1{{{\rm GL}}(1,#1)}

\begin{document}

\title[ Conjectures on the normal covering number ]
{Conjectures on the normal covering number of the finite symmetric and alternating groups}
\author[D.~Bubboloni]{Daniela Bubboloni}
\address{Daniela Bubboloni, Dipartimento di Scienze per l'Economia e l'Impresa,\newline
University of Firenze, \newline Via delle Pandette 9-D6,  50127 Firenze, Italy}
\email{daniela.bubboloni@unifi.it}

\author[C. E. Praeger]{Cheryl E. Praeger}
\address{Cheryl E. Praeger, Centre for Mathematics of Symmetry and Computation,\newline
School of Mathematics and Statistics,\newline
The University of Western Australia,\newline
 Crawley, WA 6009, Australia\newline
 Also affiliated with King Abdulaziz University, Jeddah, Saudi Arabia}\email{Cheryl.Praeger@uwa.edu.au}

\author[P. Spiga]{Pablo Spiga}
\address{Pablo Spiga,
Dipartimento di Matematica e Applicazioni,\newline
 University of Milano-Bicocca,\newline Via Cozzi 53, 20125 Milano, Italy
}
\email{pablo.spiga@unimib.it}

\begin{abstract} Let $\gamma(S_n)$ be the minimum number of proper subgroups
$H_i,\ i=1, \dots, l $ of the symmetric group $S_n$ such that each element in $S_n$
 lies in some conjugate of one of the $H_i.$ In this paper we
conjecture that $$\gamma(S_n)=\frac{n}{2}\left(1-\frac{1}{p_1}\right)
\left(1-\frac{1}{p_2}\right)+2,$$ where $p_1,p_2$ are the two smallest primes
in the factorization of $n\in\mathbb{N}$ and $n$ is neither a prime power nor
a product of two primes. Support for the conjecture is given by a previous result
for $n=p_1^{\alpha_1}p_2^{\alpha_2},$ with $(\alpha_1,\alpha_2)\neq (1,1)$.
We give further evidence by confirming the conjecture for integers
of the form $n=15q$ for an infinite set of primes $q$, and by reporting on a
{\tt Magma} computation. We make a similar conjecture for $\gamma(A_n)$, when $n$ is even,
and provide a similar amount of evidence.

 \bigskip

{\bf Keywords:} Covering, symmetric group, alternating group.

\end{abstract}
\maketitle
\section{Introduction}
Let $G$ be the symmetric group $S_n$ or the alternating group $A_n$ of degree $n\in\mathbb{N},$ acting naturally on the set $\Omega=\{1,\dots,n\}.$
In this paper we study the optimum way of covering the elements of $G$ by conjugacy classes of proper subgroups. If $H_1,\ldots,H_l$, with $l\in\mathbb{N},l\geq 2$
 are pairwise
non-conjugate proper subgroups of $G$ such that
$$
G=\bigcup_ {i=1}^l\bigcup_{g\in G}H_i^g,
$$
we say that $\Delta=\{H_i^g\,|\,1\leq i\leq l, g\in G\}$ is a {\em
  normal covering} of $G$ and that $\delta=\{H_1,\dots,H_l\}$ is a
{\em basic set} for $G$ which generates $\Delta.$ We call the elements
of $\Delta$ the {\em components}  and
the elements of $\delta$ the {\em basic components} of the normal covering.
The minimum cardinality $|\delta|$ of a basic set $\delta$ for $G$
is called the \emph{normal covering number} of $G$, denoted $\gamma(G)$
and a basic set of size $\gamma(G)$ is called a {\it minimal basic set}.
Note that the parameters $\gamma(S_n)$, $\gamma(A_n)$ are defined only for
$n\geq 3$ and $n\geq 4,$ respectively. More generally $\gamma(G)$ is defined for each non-cyclic
group $G$, since cyclic groups cannot be
covered by proper subgroups.

The first two authors introduced the parameter $\gamma(G)$ in
\cite{BP}, and proved there that $a\varphi(n)\leq \gamma (G)\leq
bn$, where $\varphi(n)$ is the Euler totient function and
$a, b$ are positive real constants depending on
whether $G$ is alternating or symmetric and whether $n$ is even or odd
(see~\cite[Theorems~A and~B]{BP}).

Recently, in ~\cite[Theorem 1.1]{BPS}, we established a lower bound linear
in $n$, given by $cn \leq  \gamma(G) \leq \frac{2}{3} n,$
where $c$ is a positive real constant, and in \cite[Remark 6.5]{BPS}, which describes
a general method for obtaining explicit values for $c$, it was shown that in the case where $G=S_n$ with $n$ even, the constant $c$ can be taken as $0.025$ for $n>792000$. This settled the question as to whether $\gamma(G)$ grows linearly in $n$, rather than growing more slowly as a constant times $\varphi(n)$.
Proof of the improved lower bound in \cite{BPS} relies on certain number theoretic results (see \cite{BLS}), and the value of $c$ obtained is unrealistically small
because of the many approximations needed first to obtain and next to apply those results.
Thus, despite the innovative methods used to achieve the definitive result that  $\gamma(S_n)$ and $\gamma(A_n)$ grow linearly with $n$, we still do not know, and we wish to know, optimum values for the constant $c$, both for $\gamma(S_n)$ and $\gamma(A_n)$.

We know quite a lot (but not everything) about these parameters in the case where $n$ is a prime power, and we summarise these results in Remark~\ref{rem1}(c).

Our interest in this paper is in the case where $n$ has at least two distinct prime divisors, that is to say, $n$ is of the form

\begin{equation}\label{factors}
n=p_1^{\alpha_1}\cdots p_r^{\alpha_r}
\end{equation}
where $r\geq2$, the $p_i$ are primes, $\alpha_i\in\mathbb{N}$, and $p_i<p_j$ for $i<j$.
We make first two conjectures about the normal covering numbers of $S_n$ and $A_n$ for such values of $n$. We believe that the value of $\gamma(S_n)$, for all such $n$, as well as
the value of $\gamma(A_n)$ when $n$ is even, is strongly connected to the following quantity:

\begin{equation}\label{g}
g(n):=\frac{n}{2}\left(1-\frac{1}{p_1}\right)\left(1-\frac{1}{p_2}\right)+2
\end{equation}
where $n, p_1, p_2$ are as in \eqref{factors}.
Note that $g(n)$ is an integer since  $g(n)-2$ is equal to the number of natural numbers  less than $n/2$ and not divisible by either $p_1$ or $p_2$(see Proposition \ref{arithm} iii)). 
In the case where $r=2$ in \eqref{factors}, a connection between $\gamma(G)$ and $g(n)$ emerged from results in \cite{BPS}, which are summarised in Remark~\ref{rem1}~(d). These results, and other evidence, motivate the following conjectures (which will also appear in the forthcoming edition of the Kourovka Notebook \cite{K}).


 \begin{con}\label{conjecture-sym}
 Let $n$ be as in \eqref{factors} with $r\geq2$ and $n\neq p_1p_2.$ Then $\gamma(S_n)=g(n)$.
\end{con}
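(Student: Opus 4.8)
The plan is to prove the two inequalities $\gamma(S_n)\le g(n)$ and $\gamma(S_n)\ge g(n)$ separately, using as test elements the permutations $t_m$ of cycle type $(m,n-m)$ for $1\le m<n/2$, and recalling that a proper subgroup $H$ yields a conjugate covering $t_m$ precisely when $H$ itself contains an element of this cycle type. Throughout, $W_i=S_{p_i}\wr S_{n/p_i}$ denotes the imprimitive group with $n/p_i$ blocks of size $p_i$.

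For the upper bound I would exhibit an explicit basic set of size $g(n)$, namely
$$\delta=\{\,S_m\times S_{n-m}\ :\ 1\le m<n/2,\ p_1\nmid m,\ p_2\nmid m\,\}\cup\{W_1,W_2\}.$$
The first family has exactly $g(n)-2$ members by Proposition~\ref{arithm}(iii), so $|\delta|=g(n)$. The heart of the verification is a purely combinatorial lemma: every partition $\lambda\vdash n$ is realised in some member of $\delta$. If some sub-collection of the parts of $\lambda$ sums to an integer $m$ with $1\le m<n/2$ and $p_1,p_2\nmid m$, then $\lambda$ is covered by $S_m\times S_{n-m}$; otherwise one must show that every such achievable partial sum is divisible by $p_1$ or by $p_2$, and deduce that all parts of $\lambda$ are divisible by $p_1$ (so $\lambda$ lies in $W_1$) or all by $p_2$ (so $\lambda$ lies in $W_2$). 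Here one uses that $W_i$ realises every cycle type all of whose parts are divisible by $p_i$ (splitting each $\gen{t_m}$-orbit into $p_i$-point blocks along a common difference), together with the fact that $W_i$ contains an $n$-cycle, which disposes of the type $(n)$.

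For the lower bound, let $\{H_1,\dots,H_l\}$ be a minimal basic set. I would first classify the subgroups that can cover a two-cycle element $t_m$. A Jordan-type argument shows that for $2\le m<n/2$ with $\gcd(m,n)=1$ a primitive proper subgroup cannot contain $t_m$: a suitable power of $t_m$ is then an $m$-cycle fixing $n-m\ge 3$ points, forcing the subgroup to contain $A_n$. Since $\gen{t_m}$ preserves no nontrivial block system in this case, no transitive proper subgroup contains $t_m$ either, so $t_m$ must be covered by an intransitive component, necessarily contained in $S_m\times S_{n-m}$; and such a component covers the single two-cycle type $(m,n-m)$ and no other. Counting these dedicated components against the hard types would give the desired bound, provided the contribution of the remaining, transitive, components is controlled.

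The main obstacle is precisely this control of transitive (imprimitive) components when $r\ge3$. When $p_j\mid m$ for some $j\ge3$ while $p_1,p_2\nmid m$, the single imprimitive group $S_{p_j}\wr S_{n/p_j}$ already covers all two-cycle types whose smaller part is divisible by $p_j$, so the naive ``one dedicated intransitive subgroup per hard type'' count overestimates the cost and must be replaced by a global optimisation that balances the economy of such wreath products against their failure to cover the types with $\gcd(m,n)=1$ (of which there are only about $\varphi(n)/2<g(n)-2$). Making this balance quantitative and sharp for every $n$ with $r\ge3$ is exactly the point at which a general proof is currently lacking; the hypotheses $r=2$ and $n=15q$ in the supporting results succeed because there are then too few competing block systems for any such saving to occur, and the same degeneracy of the extremal configuration is what forces the exclusion $n\ne p_1p_2$.
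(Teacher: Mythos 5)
The statement you are addressing is Conjecture~\ref{conjecture-sym}: the paper does not prove it, and it is open. What the paper does establish is exactly the upper bound $\gamma(S_n)\le g(n)$ (Proposition~\ref{upper}), and your upper-bound argument is essentially identical to it: the same basic set $\mathcal{P}_{\min}(S_n)\cup\{S_{p_1}\wr S_{n/p_1},\,S_{p_2}\wr S_{n/p_2}\}$ of size $g(n)$, and the same covering lemma. The one step you leave implicit --- that if no single part of the type is coprime to $p_1p_2$, and not all parts are divisible by $p_1$ nor all by $p_2$, then some sum of \emph{two} parts $x_i+x_j$ (one from $K_1$, one from $K_2$) is coprime to $p_1p_2$ and distinct from $n/2$ and $n$ --- is the actual content of the paper's proof and should be spelled out; your phrasing ``every achievable partial sum is divisible by $p_1$ or $p_2$ implies all parts divisible by $p_1$ or all by $p_2$'' is the right statement but is not proved as written.

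The genuine gap is the lower bound, and you name it yourself: controlling the transitive components when $r\ge3$ is precisely the open problem, so what you have is a strategy, not a proof. Two concrete points. First, your exclusion of primitive subgroups (via a Jordan-type result such as Lemma~\ref{extract}) and of imprimitive subgroups (as in Section~\ref{case k=2}) for the types $[m,n-m]$ with $\gcd(m,n)=1$ is sound and matches the paper, but it only yields $|\delta|\ge\varphi(n)/2$, which is strictly smaller than $g(n)-2$ once $r\ge3$; the remaining hard types $[m,n-m]$ with $\gcd(m,p_1p_2)=1$ but $\gcd(m,n)>1$ can a priori be absorbed cheaply by the wreath products $S_{p_j}\wr S_{n/p_j}$ for $j\ge3$, and ruling out such savings is exactly what forces the ad hoc three- and four-part types $Z,X,U,V$ in the proof of Theorem~\ref{thm1}(a) --- no general mechanism is known. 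Second, for $n$ even you overlook that a permutation of type $[m,n-m]$ is even, so $A_n$ itself covers every type in $\mathcal{T}$ and must be handled as a possible component; this is why the conjecture is open for even $n$ even when $r=2$, whereas for odd $n$ with $r=2$ it is settled in \cite{BP}.
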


 \begin{con}\label{conjecture-alt}
Let $n$ be as in \eqref{factors} with $n$ even, $r\geq2$, and $n\neq 2p_2, 12.$ Then  $\gamma(A_n)=g(n)$.
\end{con}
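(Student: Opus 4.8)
The plan is to prove each conjecture by squeezing $\gamma(G)$ between matching bounds $g(n)\le \gamma(G)\le g(n)$, treating $G=S_n$ and $G=A_n$ in parallel and using the arithmetic interpretation of $g(n)-2$ from Proposition~\ref{arithm}(iii) as the number of integers $k$ with $1\le k<n/2$ that are divisible by neither $p_1$ nor $p_2$. I should stress at the outset that this is a strategy rather than a completed argument: the upper bound is accessible, but the lower bound is exactly where the difficulty lies, and its resistance in full generality is what keeps these statements conjectural (the tractable families, such as $n=p_1^{\alpha_1}p_2^{\alpha_2}$ in Remark~\ref{rem1}(d) and the family $n=15q$ treated in this paper, are precisely those where that difficulty can be resolved).

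For the upper bound I would exhibit an explicit basic set of size $g(n)$. The candidate is the union of the $g(n)-2$ intransitive subgroups $(S_k\times S_{n-k})\cap G$, one for each $k$ counted by Proposition~\ref{arithm}(iii), together with the two imprimitive subgroups $(S_{n/p_1}\wr S_{p_1})\cap G$ and $(S_{n/p_2}\wr S_{p_2})\cap G$ realising the block systems with $p_1$ and with $p_2$ blocks; this pair accounts for the ``$+2$'' in $g(n)$. To check that these conjugacy classes cover $G$, I would argue as follows. If $\sigma\in G$ lies in no conjugate of a chosen intransitive subgroup, then (using that $p_1,p_2\mid n$, so $p_i\nmid k\iff p_i\nmid(n-k)$) every $\sigma$-invariant subset has cardinality divisible by $p_1$ or by $p_2$. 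A short number-theoretic argument on subset-sums of the cycle lengths then shows that either all cycle lengths are divisible by $p_1$ or all are divisible by $p_2$: otherwise a cycle of length divisible by $p_1$ but not $p_2$ and one divisible by $p_2$ but not $p_1$ would combine to a subset sum coprime to $p_1p_2$. In the surviving case, the cyclic ``colour position modulo $p$'' partition of each cycle is $\sigma$-invariant, so $\sigma$ preserves a block system with $p_1$ (resp.\ $p_2$) blocks and lies in one of the two wreath products. The exclusions $n\neq p_1p_2$ (and $n\neq 2p_2,12$ in the alternating case) are where this construction degenerates and must be handled separately.

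For the lower bound I would produce $g(n)$ elements that are pairwise \emph{incompatible}, meaning that no single proper subgroup, up to conjugacy, can contain two of them; any basic set then needs at least one component per element. For each admissible $k$ I would take an element $g_k$ that is a product of two cycles of coprime lengths summing to $n$, arranged so that its only invariant subset sizes are $0,k,n-k,n$ and so that it avoids all imprimitive and primitive maximal subgroups; such $g_k$ can be covered only by the class of $(S_k\times S_{n-k})\cap G$, forcing $g(n)-2$ distinct components. I would then supply two further transitive elements, such as suitable $n$-cycles or elements preserving only one of the two block systems, and show that no single additional component covers both, yielding the extra $2$.

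The hard part, and the genuine obstacle, is the control of the \emph{primitive} maximal subgroups (other than $A_n$) in the incompatibility step. For the counting to close at exactly $g(n)$ one must verify that no primitive maximal subgroup of degree $n$ can simultaneously meet several of the chosen $g_k$, which would let an opponent undercut the bound. This requires uniform information about the cycle types occurring in arbitrary primitive groups of degree $n$, and is available only through the classification of finite simple groups and the Liebeck--Praeger--Saxl description of maximal subgroups of $S_n$ and $A_n$. In the tractable cases one can enumerate the finitely many relevant primitive subgroups and eliminate each by an explicit cycle-type computation, exactly as I would do for $n=15q$; the absence of such uniform control for general $n$ is the reason these statements are offered as conjectures rather than theorems.
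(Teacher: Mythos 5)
The statement you are addressing is Conjecture~\ref{conjecture-alt}; the paper offers no proof of it, only the upper bound $\gamma(A_n)\le g(n)$ (Proposition~\ref{upper}) together with evidence: the case $r=2$ via \cite{BP} (Remark~\ref{rem1}(d)), the family $n=6q$ (Theorem~\ref{thm1}(b)), and computations. You correctly recognise this, and your upper-bound sketch is essentially Proposition~\ref{upper}: the intransitive components $P_x$ with $\gcd(x,p_1p_2)=1$ plus two wreath products, with the covering argument running on cycle lengths and on sums of two cycle lengths modulo $p_1$ and $p_2$. One correction there: the exclusions $n=2p_2$ and $n=12$ are not points where the construction ``degenerates'' --- the upper bound holds for them too; they are excluded because the conjectured \emph{equality} fails, since $\gamma(A_{2p_2})=g(2p_2)-1$ and $\gamma(A_{12})=3<g(12)=4$ (Remark~\ref{rem1}(d) and Proposition~\ref{12}).

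The substantive gap is in your lower-bound mechanism. You propose $g(n)$ pairwise incompatible elements, one element $g_k$ for each admissible $k$ that ``can be covered only by'' the class of $P_k\cap A_n$. This works only for $k\in\mathfrak{A}$, i.e.\ $\gcd(k,n)=1$: for those, a permutation of type $[k,n-k]$ lies in no imprimitive maximal subgroup (Section~\ref{case k=2}) and, by \cite[Theorem 3.3]{M}, in no primitive one except possibly for $[1,n-1]$. But there are only $\varphi(n)/2$ such $k$, which is strictly fewer than $g(n)-2$ as soon as $r\ge3$; for $k$ coprime to $p_1p_2$ yet divisible by some $p_j$ with $j\ge3$, a permutation of type $[k,n-k]$ \emph{does} lie in $S_d\wr S_{n/d}$ for suitable $d\mid\gcd(k,n)$, and no element confined to $P_k$ alone is exhibited or known. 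This is precisely why the paper's own lower-bound arguments (the proof of Theorem~\ref{thm1}(b)) are not incompatibility arguments: they force the components $\overline{P}_x$ for $x\in\mathfrak{A}'$ individually, then force each imprimitive component $I_2,I_3,I_q$ by a counting dichotomy (``either $I_p\in\delta$, or $\delta$ must contain more intransitive components than the budget $g(n)$ permits''), and finish with one extra type such as $[1,n-1]$. So even as a strategy your plan would need to be restructured along those lines, and of course neither route is known to close for general even $n$ --- which is exactly why the statement is posed as a conjecture rather than proved.
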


As we note in Remark~\ref{rem1}~(d), if $r=2$ then Conjecture \ref{conjecture-alt}
is true, and also Conjecture \ref{conjecture-sym}
is true for odd $n$ (but open for even $n$). Moreover, extensive computation performed with {\tt Magma} \cite{Magma} 
shows that both conjectures hold for the first several hundred values of $n$.

That $g(n)$ is a common upper bound for $\gamma(S_n)$ and  $\gamma(A_n)$ for all $n\in\mathbb{N},$ with $r\geq2,$ is not difficult to prove (see Proposition~\ref{upper}).
The issue is to show that $g(n)$ is also a lower bound. We provide in Theorem~\ref{thm1} further evidence
for the truth of Conjectures~\ref{conjecture-sym} and~\ref{conjecture-alt}, for infinite families of integers
$n$ with three distinct prime divisors (see also Corollary~\ref{support}).

\begin{theorem}\label{thm1}
\begin{enumerate}
\item[(a)] Let $n=15q$, where $q$ is an odd prime such that $q\equiv 2 \pmod{15}$ and $q\not\equiv 12 \pmod{13}$.
Then $\gamma(S_{n})=g(n)=4q+2.$
\item[(b)] Let $n=6q$, where $q$ is a prime, $q\geq 11$. Then $\gamma(A_{n})=g(n)=q+2.$
\end{enumerate}
\end{theorem}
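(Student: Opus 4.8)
The plan is to match the common upper bound $\gamma(G)\le g(n)$, which holds in both cases by Proposition~\ref{upper}, with a lower bound $\gamma(G)\ge g(n)$, so that all the content lies in the lower bound. I would fix a minimal basic set $\delta=\{H_1,\dots,H_l\}$ with $l=\gamma(G)$, enlarge each $H_i$ to a maximal subgroup of $G$, and sort the resulting maximal subgroups into the three standard families: intransitive, imprimitive, and primitive. The engine of the argument is a family of fixed-point-free test permutations with exactly two cycles: for each integer $m$ with $1\le m<n/2$ and $\gcd(m,p_1p_2)=1$ (so $\gcd(m,15)=1$ in part (a) and $\gcd(m,6)=1$ in part (b)), let $\sigma_m$ have cycle type $(m,\,n-m)$. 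Note that in part (b) every such $\sigma_m$ is automatically even, since $n$ is even, so it genuinely lies in $A_n$; and by Proposition~\ref{arithm}(iii) there are exactly $g(n)-2$ admissible values of $m$.

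The first step is to pin down the intransitive behaviour. Since $\sigma_m$ has two cycles of distinct lengths and no fixed points, the only intransitive maximal subgroup of $S_n$ containing a conjugate of $\sigma_m$ is the set stabiliser $S_m\times S_{n-m}$ (and in $A_n$ its index-two intersection with $A_n$). Hence distinct admissible values of $m$ can never be absorbed by a common intransitive component, so these $g(n)-2$ classes force $g(n)-2$ pairwise non-conjugate intransitive components, \emph{provided} no test class is swallowed by an imprimitive or primitive component. The two further components making up the ``$+2$'' would come from two additional elements chosen to lie in none of the subgroups already counted --- in the $S_n$ case one forcing the component $A_n$ (to capture even permutations lying in none of the listed intransitive stabilisers) together with one boundary cycle type, and an analogous pair in the $A_n$ case.

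The two genuine obstacles are the imprimitive and primitive components, and this is exactly where the hypotheses on $q$ are spent. For the imprimitive case I would use that $\gcd(m,p_1p_2)=1$ forces $\gcd(m,n)=\gcd(m,q)\in\{1,q\}$, so a $\sigma_m$-invariant block system can only have block size dividing $q$; thus the single imprimitive subgroup able to contain any $\sigma_m$ is the wreath product $S_q\wr S_{n/q}$, and it contains precisely the $\sigma_m$ with $q\mid m$ (four classes in (a), one in (b)). The subtle point is that this one wreath product covers all of these $q$-divisible test classes at once, so a naive independent-set count falls short of $g(n)$ by a bounded amount; closing the gap requires a refined covering-cost argument (most naturally organised through the associated graph $\Delta(G)$) showing that deploying $S_q\wr S_{n/q}$ as a component forces the saved components to reappear elsewhere, so that no configuration beats the intransitive one. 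For the primitive case, a classical theorem of Jordan confines any primitive group of degree $n$ containing a cycle of prime length $p\le n-3$ to $A_n$ or $S_n$; combining this with the known bounds on element orders and cycle types in primitive groups of degree $n$ that do not contain $A_n$, I would show that no $\sigma_m$ lies in a proper primitive maximal subgroup. The residual and, I expect, hardest threat is a sporadic primitive group whose degree is exactly $15q$ (respectively $6q$), arising from an almost simple or classical action, which might cover several test classes simultaneously; excluding these is precisely the purpose of $q\equiv 2\pmod{15}$ and $q\not\equiv 12\pmod{13}$, which I would feed, together with the classification of primitive groups of the given degree and the arithmetic of their element orders, into a case check confirming that every such candidate is either absent or contains no $\sigma_m$. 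Once the imprimitive and primitive cases are dispatched, the bound $l\ge (g(n)-2)+2=g(n)$ closes both parts, the parity bookkeeping for $A_n$ in (b) being routine since $n=6q$ is even.
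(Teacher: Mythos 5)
Your skeleton (match the upper bound of Proposition~\ref{upper} with a lower bound obtained by forcing $g(n)-2$ pairwise non-conjugate intransitive components via the $2$-part types $[m,n-m]$ with $\gcd(m,p_1p_2)=1$, plus two extra components) is the right one, and it is the paper's. But the proposal has a genuine gap exactly at the point you flag as ``a refined covering-cost argument'': you observe that a single imprimitive component $S_q\wr S_{n/q}$ (or $S_{n/q}\wr S_q$) absorbs all the test classes with $q\mid m$ (four of them in part (a)), and then you do not close the resulting deficit. This is the heart of the proof. The paper closes it by introducing auxiliary partitions with $3$ and $4$ parts --- $Z=[3,q,14q-3]$, $X=[10,4q,11q-10]$, $U=[5,q-5,10q+5,4q-5]$, $V=[q-7,q+7,6q-7,7q+7]$ --- and running a block-pattern case analysis (Section~4) to show these belong to no imprimitive subgroup, then splitting on whether $A_n\in\delta$ to force the missing components one type at a time. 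Note also that your ``$+2$'' is misattributed: in part (a) it does not come from $A_n$ plus a boundary type, but from the wreath products $I_3$ and $I_5$, which are forced by the types $[x,n-x]$ with $\gcd(x,n)=3$ and $5$ --- types that lie entirely \emph{outside} your test family $\gcd(m,15)=1$. A lower-bound argument built only on your $g(n)-2$ test classes cannot produce these two components.

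Two further points are wrong as stated. First, the congruence hypotheses have nothing to do with sporadic primitive groups: $q\equiv 2\pmod{15}$ is used to guarantee that no term and no sum of two terms of $U$ or $V$ is coprime to $n$ (so these types avoid every $P_x$ with $x\in\mathfrak{A}$), and $q\not\equiv 12\pmod{13}$ kills one surviving block pattern in the imprimitivity analysis for $V$. Primitive subgroups are excluded without any congruence, via M\"uller's classification, the Guralnick--Magaard fixed-point bound, and the Jordan/Wielandt theorem. Second, the Jordan-type theorem you invoke for the $2$-part test classes requires a cycle with several fixed points, whereas $[m,n-m]$ is fixed-point-free; the correct tool here is M\"uller's theorem on primitive groups containing a permutation with exactly two cycles (Lemma~\ref{Muller}). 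With these repairs the primitive case is fine, but the imprimitive deficit argument still has to be supplied in full before the bound $l\geq g(n)$ follows.
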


The role of the function $g(n)$ in the study of normal coverings of finite groups seems to go beyond the symmetric and alternating case:  Britnell
 and Mar\'{o}ti in \cite{BM} have recently shown that, if $n$ is as in (\ref{factors}), with $r\geq 2,$ and $q$ is a prime power, then for all linear
  groups $G$ with $SL_n(q)\leq G\leq GL_n(q),$ the upper bound $\gamma(G)\leq g(n)$ 
holds, with equality for $r=2$ and for other infinite families of $n$-dimensional classical groups with $r\geq 3.$

We make now several more remarks about  Conjectures \ref{conjecture-sym} and \ref{conjecture-alt}.

\begin{remark}\label{rem2}{\rm
(a) Observe that 
$\frac{1}{2}\left(1-\frac{1}{p_1}\right)\left(1-\frac{1}{p_2}\right)\geq \left(1-\frac{1}{2}\right)\left(1-\frac{1}{3}\right)=\frac{1}{6}$. Thus,
 for all $n\in\mathbb{N}$ with $r\geq 2$ and $n\neq p_1p_2,$
 Conjecture \ref{conjecture-sym}  implies $\gamma(S_n)\geq \frac{n}{6}+2,$ and thus $\gamma(S_n)\geq cn$ with $c=\frac{1}{6},$ a value much larger than
  the value $0.025$  given in \cite[Remark 6.5]{BPS}. Moreover, if Conjecture~\ref{conjecture-sym} is true,
 then $c=\frac{1}{6}$ is the largest constant such that $\gamma(S_n)\geq cn$ for all $n\in\mathbb{N}$. Namely Conjecture~\ref{conjecture-sym}
  would imply that $\gamma(S_n)=g(n) =\frac{n}{6}+2$ for all multiples $n$ of $6$, greater than $6.$

(b) The case $n=12$ is a genuine exception in Conjecture \ref{conjecture-alt} because $\gamma(S_{12})=4=g(12),$
while $\gamma(A_{12})=3$ (see Lemma \ref{12}). We note that  Lemma \ref{12} corrects \cite[Proposition 7.8(c)]{BP}
which is incorrect just in the case $n=12$; this error is repeated in \cite[Corollary 7.10]{BP} (which
asserts that $\gamma(A_{12})=4$). We give therefore a correct version of \cite[Table 3]{BP}.
{\openup 2pt
\begin{center}
 \begin{table}[H]
\begin{tabular}{l|llllllllll}\hline
$n$&3&4&5&6&7&8&9&10&11&12\\ \hline
$\gamma(S_n)$&2&2&2&2&3&3&4&3&   5&4\\ 
$\gamma(A_n)$&-&2&2&2&2&2&3&3&4&3\\  \hline
\end{tabular}
\caption{Values of $\gamma(S_n)$ and $\gamma(A_n)$ for small $n$.}\label{tbl:exact}
 \end{table}
\end{center}
}
(c) Conjecture~\ref{conjecture-alt} does not address the case of $G=A_n$ with $n$ odd. For $n$ an odd prime, we know (see Remark~\ref{rem1}~(c)) that the difference
between $\gamma(S_n)$ and $\gamma(A_n)$ is unbounded. If this were the case also for odd non-prime powers $n$, then these parameters could not be both equal to $g(n).$ Although our computer experiments indicate that $\gamma(A_n)\ne g(n)$ for some odd integers $n$,
we do not have a sufficiently good understanding to predict the general behaviour of $\gamma(A_n)$ for odd $n$.
}
\end{remark}

Our third conjecture relates to the form of a minimal basic set. We state it in Section~\ref{maximal} as Conjecture~\ref{conjecture-intr-sa}, since it uses notation introduced in that section. In particular we show, in Proposition~\ref{conjectures}, 
that Conjecture~\ref{conjecture-intr-sa} implies both Conjecture~\ref{conjecture-sym} for $n$ odd, and Conjecture~\ref{conjecture-alt}.
Finally, in this introductory section, we make a few general comments about the normal covering number.


\begin{remark}\label{rem1}{\rm
(a) When computing $\gamma(G)$,  we can always assume that the basic components in $\delta$ are maximal subgroups of $G$.

(b) If $\gamma(S_n)$ is realized by a basic set $\delta$ not
involving $A_n$, then $\gamma(A_n)\leq \gamma(S_n)$, since
in this case the set of subgroups $H\cap A_n$, for $H\in\delta$, forms a basic set for $A_n$.
It is not known whether the inequality
$\gamma(A_n)\leq \gamma(S_n)$ holds for all $n$, and this remains an open question.

(c) Suppose that $n=p^a$ for a prime $p$ and a positive integer $a$.
For $a=1$ we know that, for $p$ odd, $\gamma(S_p)= \frac{p-1}{2}$ (see \cite[Proposition 7.1]{BP}) while $\gamma(A_p)$ lies between $\lceil\frac{p-1}{4}\rceil$ and $\lfloor\frac{p+3}{3}\rfloor$ (see \cite[Remark 7.2]{BP}). In particular we have $\gamma(S_p)-\gamma(A_p)\geq \frac{p-9}{6}.$ 

Now consider $a\geq2$. Then $\gamma(S_n)=\frac{n}{2}(1-\frac{1}{p}) +1$ if $p$ is odd, and if $p=2$ then $\gamma(S_n)$ lies between $\frac{n+8}{12}$ and $\frac{n+4}{4}$ (see \cite[Proposition 7.5]{BP}). For alternating groups we have $\gamma(A_n)= \frac{n+4}{4}$ if $p=2$ and $n\ne8$ 
 and when $p$ is odd we only know that $\gamma(A_n)$ lies between  $\frac{n}{4}(1-\frac{1}{p})$ and $\frac{n}{2}(1-\frac{1}{p}) +1$.

(d) Suppose now that $n$ is as in \eqref{factors} with $r=2$. If $\alpha_1+\alpha_2=2$, that is to say, if  $n=p_1p_2$, then by \cite[Proposition 7.6]{BP},
$\gamma(S_{n})=g(n)-1$ if $n$ is odd, while $\gamma(A_{n})=g(n)-1$ if $n$ is even,
with $g(n)$ as in \eqref{g}.
Moreover, if $\alpha_1+\alpha_2\geq 3$, then
it was shown in  \cite[Proposition 7.8]{BP} that
$\gamma(S_{n})=g(n)$ if $n$ is odd, while $\gamma(A_{n})=g(n)$ if $n$ is even.
In particular, these results show that,  when $r=2$,  Conjecture~\ref{conjecture-alt} is true, and also Conjecture~\ref{conjecture-sym} is true for odd $n$.

}
\end{remark}

In the light of Remark~\ref{rem1}(b)--(d) we pose the following problems.

\begin{problem}\label{prob1}{\rm
Determine whether or not the inequality $\gamma(A_n)\leq \gamma(S_n)$ holds for all $n\geq4$. Moreover is it true that $\gamma(A_n)= \gamma(S_n)$  for all but finitely many even values of $n$? 
}
\end{problem}

\begin{problem}\label{prob2}{\rm
Determine the values of $\gamma(S_n)$ for $n=2^a\geq 16$, and of $\gamma(A_n)$ for all odd prime powers $n$. 
}
\end{problem}
\begin{problem}\label{prob3}{\rm
Determine the values of $\gamma(S_n)$ for $n=2^{\alpha_1}p_2^{\alpha_2}$, and of $\gamma(A_n)$ for odd integers $n=p_1^{\alpha_1}p_2^{\alpha_2}$, where $\alpha_1,\alpha_2\in\mathbb{N}.$
}
\end{problem}
\section{Notation and basic facts}
 \subsection{Arithmetic}\label{arit}
 In the study of the normal coverings of symmetric and alternating groups we often face some arithmetic questions. Thus it is natural to begin with a purely arithmetic section.
For $a,b\in\mathbb{R},$ with $a\leq b$, we use the usual notation $(a,b), [a,b),(a,b], [a,b]$ to denote the intervals.

Throughout this section, for $n\in\mathbb{N}$, assume the notation (\ref{factors}), with $r\in\mathbb{N},$
 that is write $n=p_1^{\alpha_1}\cdots p_r^{\alpha_r},$ with $p_i$ primes, $\alpha_i\in\mathbb{N}$, and $p_i<p_j$ for $i<j$. Note that we include here the possibility that $n$ is a prime power.

 \begin{definition}\label{arithm-def}  For each subset $K$ of $R:=\{1,\dots,r\}$, define the following subsets of $\mathbb{N}:$
$$
P_K:= \{x\in\mathbb{N} : 1\leq x\leq n,\  p_k\mid x \hbox{ for all}\  k\in K\}
$$
$$
P^K:=\{x\in\mathbb{N} : 1\leq x\leq n,\  p_k\nmid x \hbox{ for all}\  k\in K\}
$$
and the rational numbers:
$$
p_K:=\prod_{k\in K} \frac{1}{p_{k}}, \quad\quad p^K:=\prod_{k\in K} \left(1-\frac{1}{p_{k}}\right).
$$
For $I,J\subseteq R$ with $I\cap J=\varnothing,$  set 
$$
P_I^J:=P_I\cap P^J.
$$

 \end{definition}

Note that $P_I=P_I^{\varnothing}$ and $P^J= P_{\varnothing}^J$ and, as usual, we consider a product over the empty set to be equal to $1.$ From now on we 
use the notation introduced in Definition~\ref{arithm-def}, without further reference. For the purpose of this paper it is important to obtain the order
  of  $P_I^J\cap [1,n/2),$ because that allows us to give an interpretation of the quantity $g(n)$ as well as to count,
in many circumstances, various sets of partitions or sets of intransitive maximal subgroups of $S_n$. We start by finding the cardinality of $P_I^J.$

\begin{lemma}\label{general} Let $n,a,b,c\in\mathbb{N},$ with $n=abc$ and $\gcd(a,b)=1.$ 
 If $d\mid a$, then $$|\{x\in\mathbb{N}: 1\leq x\leq n,\  d\mid x,\  \gcd(x,b)=1\}|=\frac{n\,\varphi(b)}{bd}.$$ In particular, for all $I,J\subseteq R$ with $I\cap J=\varnothing,$ we have $|P_I^J|=n\,p_Ip^J$.
\end{lemma}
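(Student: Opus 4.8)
The plan is to prove the displayed counting identity first, and then obtain the formula for $|P_I^J|$ as an immediate specialization. For the first part, I would begin by parametrizing the integers being counted: every $x$ with $d\mid x$ is uniquely $x=dy$ with $y\in\mathbb{N}$, and since $d\mid a\mid n$ the map $x\mapsto y$ is a bijection from $\{x:1\leq x\leq n,\ d\mid x\}$ onto $\{y:1\leq y\leq n/d\}$. The key point is that the coprimality condition transfers cleanly: because $d\mid a$ and $\gcd(a,b)=1$ we have $\gcd(d,b)\mid\gcd(a,b)=1$, so $\gcd(dy,b)=1$ if and only if $\gcd(y,b)=1$. Hence the quantity to be computed equals the number of $y\in[1,n/d]$ with $\gcd(y,b)=1$.

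Next I would exploit the divisibility $b\mid n/d$, which holds because $n/d=(a/d)\,bc$ with $a/d,c\in\mathbb{N}$. Counting integers coprime to $b$ in an interval whose length is a multiple of $b$ is exact: among any $b$ consecutive integers precisely $\varphi(b)$ are coprime to $b$, so partitioning $[1,n/d]$ into $(n/d)/b$ consecutive blocks of length $b$ gives
$$
|\{y:1\leq y\leq n/d,\ \gcd(y,b)=1\}|=\frac{n/d}{b}\,\varphi(b)=\frac{n\,\varphi(b)}{bd},
$$
which is the asserted value. (Equivalently one may use inclusion–exclusion, writing the count as $\sum_{e\mid b}\mu(e)\lfloor (n/d)/e\rfloor$ and observing that each floor is exact since $e\mid b\mid n/d$.)

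For the ``in particular'' clause I would specialize as follows. Given $I,J\subseteq R$ with $I\cap J=\varnothing$, set $d=\prod_{k\in I}p_k$, $b=\prod_{k\in J}p_k$, $a=\prod_{k\in I}p_k^{\alpha_k}$, and $c=n/(ab)$. Then $d\mid a$, $\gcd(a,b)=1$ (as $I\cap J=\varnothing$), and $c$ is a positive integer because every $\alpha_k\geq1$; so the hypotheses $n=abc$ and $\gcd(a,b)=1$ are satisfied. Since the $p_k$ are distinct primes, the conditions ``$p_k\mid x$ for all $k\in I$'' and ``$p_k\nmid x$ for all $k\in J$'' are respectively equivalent to $d\mid x$ and $\gcd(x,b)=1$, so $P_I^J$ is exactly the set counted in the first part. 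Applying it and using $\varphi(b)/b=\prod_{k\in J}(1-1/p_k)=p^J$ together with $1/d=\prod_{k\in I}1/p_k=p_I$ yields $|P_I^J|=n\,\varphi(b)/(bd)=n\,p_Ip^J$.

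I do not anticipate a serious obstacle; the argument is elementary. The one point requiring care is the choice of $b$ as the squarefree radical $\prod_{k\in J}p_k$ rather than the full factor $\prod_{k\in J}p_k^{\alpha_k}$: coprimality to $b$ must capture exactly non-divisibility by the primes $p_k$ with $k\in J$, while simultaneously $b$ must divide $n/d$ so that the block count is exact. Checking these two requirements together, along with $\gcd(d,b)=1$, are the only steps where one must be attentive.
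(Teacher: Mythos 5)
Your proof is correct and follows essentially the same route as the paper's: substitute $x=dy$ using $\gcd(d,b)=1$ to transfer the coprimality condition, then count integers coprime to $b$ in $[1,n/d]$ exactly because $b\mid n/d$. The only (immaterial) difference is that in the specialization the paper takes $b=\prod_{j\in J}p_j^{\alpha_j}$ rather than the radical $\prod_{j\in J}p_j$; both choices satisfy the hypotheses and give $\varphi(b)/b=p^J$.
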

\begin{proof}  Let $n,a,b,c,d\in\mathbb{N},$ with $n=abc$, $\gcd(a,b)=1$ 
 and $d\mid a$. We
set $$X=\{x\in\mathbb{N}: 1\leq x\leq n,\  d\mid x,\  \gcd(x,b)=1\}$$    and  $$Y=\{ x\in\mathbb{N}: \hbox{there exists}\ u\in\mathbb{N}, u\leq n/d\   \hbox{with}\  x=du,\  \gcd(u,b)=1\}.$$
Since $d\mid a$ and $\gcd(a,b)=1$,  we have that $\gcd(d,b)=1.$ It follows that $X=Y.$  In particular $|X|$ is equal to the number of positive integers less than or equal to $\displaystyle{n/d=b\left(c\,\frac{a}{d}\right)}\in\mathbb{N}$ which are coprime to $b$. But, given any $k\in\mathbb{N},$  the number of positive integers coprime to $b$ and contained in the interval $[1,bk],$ is given by $k\varphi(b).$ Thus $|X|=\displaystyle{c\,\frac{a}{d}\varphi(b)=\frac{n\,\varphi(b)}{bd}}.$

Let now $I,J\subseteq R$ with $I\cap J=\varnothing$ and consider
 $a=\prod_{i\in I} p_i^{\alpha_i}, d=\prod_{i\in I} p_i, b=\prod_{j\in J} p_j^{\alpha_j}, c=\frac{n}{ab}.$ Since $I\cap J=\varnothing$, we have that $\gcd(a,b)=1$ and thus
  the above result  applies giving
 $$|P_I^J|=\frac{nb\prod_{j\in J}(1-\frac{1}{p_j})}{b\prod_{i\in I} p_i}=n\,p_Ip^J.$$

\end{proof}

Next we decide, for $n$ even, when the integer
 $n/2$ lies in $P_I^J.$ 
   \begin{lemma}\label{n:2} Let $n\in \mathbb{N}$ be even (that is, $p_1=2$),
and let $I,J\subseteq R$ with $I\cap J=\varnothing.$
  \begin{itemize}
  \item[i)] Suppose $\alpha_1=1.$ Then $n/2\in P_I^J$ if and only if $I\subseteq \{2,\dots,r\}$ and either $J=\{1\}$ or $J=\varnothing$.
    \item[ii)] Suppose $\alpha_1\geq 2.$ Then $n/2\in P_I^J$ if and only if $J=\varnothing.$
\end{itemize}

   \end{lemma}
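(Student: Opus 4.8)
The plan is to reduce the membership $n/2 \in P_I^J$ to a question about which of the primes $p_1,\dots,p_r$ divide $n/2$, and then read off the constraints on $I$ and $J$ directly. Since $n$ is even we have $n\geq 2$, so $1\leq n/2\leq n$ holds automatically; hence the range condition in the definition of $P_I^J$ is no obstacle, and $n/2\in P_I^J$ is equivalent to the two conditions $p_k\mid n/2$ for all $k\in I$, and $p_k\nmid n/2$ for all $k\in J$.

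First I would record the $p$-adic valuations of $n/2$. Writing $n=2^{\alpha_1}p_2^{\alpha_2}\cdots p_r^{\alpha_r}$, we have $n/2=2^{\alpha_1-1}p_2^{\alpha_2}\cdots p_r^{\alpha_r}$. Thus for every $k\in\{2,\dots,r\}$ the valuation of $n/2$ at $p_k$ is $\alpha_k\geq 1$, so $p_k\mid n/2$; whereas the valuation at $p_1=2$ is $\alpha_1-1$, which is positive precisely when $\alpha_1\geq 2$ and zero when $\alpha_1=1$. In short, every prime $p_k$ divides $n/2$ except possibly $p_1$, and $p_1\mid n/2$ if and only if $\alpha_1\geq 2$.

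Next I would split into the two cases and translate. In case ii), where $\alpha_1\geq 2$, all the $p_k$ (including $p_1$) divide $n/2$. The condition $p_k\mid n/2$ for $k\in I$ then holds for every $I\subseteq R$, so it imposes nothing; while the condition $p_k\nmid n/2$ for $k\in J$ can hold only if $J$ contains no index at all, that is, $J=\varnothing$. This yields ii). In case i), where $\alpha_1=1$, the integer $n/2$ is odd, so $p_1=2\nmid n/2$ while $p_k\mid n/2$ for all $k\geq 2$. Now $p_k\mid n/2$ for all $k\in I$ forces $1\notin I$, i.e. $I\subseteq\{2,\dots,r\}$; and $p_k\nmid n/2$ for all $k\in J$ forces $J\subseteq\{1\}$, i.e. $J=\{1\}$ or $J=\varnothing$. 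Conversely these two constraints clearly suffice, and they are compatible with the standing hypothesis $I\cap J=\varnothing$. This yields i).

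There is no serious obstacle here: the argument is a direct verification once the valuations of $n/2$ are written down. The only point requiring the slightest care is the asymmetric role of $p_1=2$, namely that whether it divides $n/2$ is exactly what distinguishes the hypothesis $\alpha_1=1$ from $\alpha_1\geq 2$; keeping track of this is precisely what produces the two separate cases.
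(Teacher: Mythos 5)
Your argument is correct and follows essentially the same route as the paper: in both cases one simply observes which primes $p_k$ divide $n/2$ (all of them except possibly $p_1$, with $p_1\mid n/2$ exactly when $\alpha_1\geq 2$) and translates the membership conditions for $P_I^J$ into the stated constraints on $I$ and $J$. No gaps.
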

   \begin{proof} i) Since $\alpha_1=1$ , we have that  $n/2=p_2^{\alpha_2}\cdots p_r^{\alpha_r}$ is odd. Suppose that $n/2\in P_I^J$. Then $1\notin I$ so
 $I\subseteq \{2,\dots,r\}.$ Moreover, if $j\in J$ then $p_j\nmid n/2=p_2^{\alpha_2}\cdots p_r^{\alpha_r}$ and thus the only possibility is $j=1$, so that $J\subseteq \{1\}.$ It follows that $J=\{1\}$ or $J=\varnothing$. Conversely, it is clear that, if  $I\subseteq \{2,\dots,r\}$ and either  $J=\{1\}$ or $J=\varnothing$, then $n/2\in P_I^J$

 ii) Since $\alpha_1\geq 2$, we have $n/2=2^{\alpha_1-1}\cdots p_r^{\alpha_r}$ even and divisible by the same prime factors as $n$. Thus if $n/2\in P_I^J,$  we necessarily have $J=\varnothing$. Conversely if $J=\varnothing$, then $n/2\in P_I^{\varnothing}=P_I$ holds for all $I\subseteq R$.
   \end{proof}

   \begin{proposition}\label{arithm}
Let $n\in\mathbb{N}$ and let $I,J\subseteq R$ with $I\cap J=\varnothing.$

   \begin{itemize}
 \item[i)] If $n$ is odd, then $|P_I^J\cap [1,n/2)|=\lfloor \frac{n}{2}p_Ip^J\rfloor.$
 \item[ii)] If $n$ is even (that is, $p_1=2$),  then we have the following:
 \begin{itemize}\item[a)]  Suppose $\alpha_1=1.$  If $J=\varnothing$ and $I\subseteq \{2,\dots,r\},$
then $|P_I^J\cap [1,n/2)|=\frac{n\,p_Ip^J}{2}-1, $ and otherwise  $|P_I^J\cap [1,n/2)|=\lfloor
\frac{n}{2}p_Ip^J\rfloor.$
\item[b)]  Suppose $\alpha_1\geq 2.$ If $J=\varnothing,$ then $|P_I^J\cap [1,n/2)|=\frac{n\,p_Ip^J}{2}-1,$
and otherwise $|P_I^J\cap [1,n/2)|= \frac{n}{2}p_Ip^J.$
\end{itemize}
\item[iii)] If $r\geq 2$, and $J=\{j_1,j_2\}\subseteq R$ with $j_1\neq j_2$, then
 $$|P^J\cap [1,n/2)|=\frac{n}{2}\left(1-\frac{1}{p_{j_1}}\right)\left(1-\frac{1}{p_{j_2}}\right)=g(n)-2.$$ In particular, for all $n\in\mathbb{N}$ with $r\geq2$,  $g(n)$ is a positive integer.
 \item[iv)] If $n>2$ then $|P^R\cap [1,n/2)|=\frac{\varphi(n)}{2}.$
\end{itemize}

 \end{proposition}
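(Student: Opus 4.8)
The plan is to exploit the involution $\iota\colon x\mapsto n-x$ on the set $\{1,\dots,n-1\}$. Since $p_k\mid n$ for every $k\in R$, we have $p_k\mid x$ if and only if $p_k\mid(n-x)$, so $\iota$ preserves divisibility by each $p_k$ and hence restricts to an involution of $P_I^J\cap\{1,\dots,n-1\}$; moreover $\iota$ carries the interval $[1,n/2)$ bijectively onto $(n/2,n)$. First I would record the master identity
\[
|P_I^J\cap[1,n/2)|=\tfrac12\bigl(|P_I^J|-\eta-\epsilon\bigr),
\]
where $\eta:=1$ if $n\in P_I^J$ and $\eta:=0$ otherwise, and $\epsilon:=1$ if $n/2\in P_I^J$ and $\epsilon:=0$ otherwise (so $\epsilon=0$ automatically when $n$ is odd). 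Indeed, every element of $P_I^J\cap\{1,\dots,n-1\}$ other than $n/2$ (which lies there only when $n$ is even and is then the unique fixed point of $\iota$) is paired by $\iota$ with a distinct partner on the opposite side of $n/2$, so the two intervals $[1,n/2)$ and $(n/2,n)$ contain equally many elements of $P_I^J$; the identity follows on writing $|P_I^J|$ as the sum of its parts in $[1,n/2)$, $\{n/2\}$, $(n/2,n)$ and $\{n\}$. Here $|P_I^J|=n\,p_Ip^J$ by Lemma~\ref{general}, and $n\in P_I^J$ exactly when $J=\varnothing$ (as $p_k\mid n$ for all $k$), so $\eta=1$ iff $J=\varnothing$.

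It then remains to evaluate $\epsilon$ and the parity of $N:=|P_I^J|=n\,p_Ip^J$ in each case. When $n$ is odd we have $\epsilon=0$, and $N$ is odd precisely when $J=\varnothing$ (because $\iota$ is fixed-point-free on $\{1,\dots,n-1\}$, so $P_I^J\cap\{1,\dots,n-1\}$ has even size, and $n\in P_I^J$ iff $J=\varnothing$); substituting into the master identity yields $\lfloor\tfrac n2 p_Ip^J\rfloor$ in both the $J=\varnothing$ and $J\neq\varnothing$ subcases, proving i). When $n$ is even, $\epsilon=[\,n/2\in P_I^J\,]$ is read off from Lemma~\ref{n:2}: for $\alpha_1=1$ it equals $1$ exactly when $I\subseteq\{2,\dots,r\}$ and $J\in\{\varnothing,\{1\}\}$, and for $\alpha_1\ge2$ it equals $1$ exactly when $J=\varnothing$. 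Feeding these values, together with $\eta$, into the master identity and determining in each branch whether $N$ is even or odd produces the stated closed forms: the term $\tfrac{n\,p_Ip^J}{2}-1$ arises in the branch $\epsilon=\eta=1$, the value $\lfloor\tfrac n2 p_Ip^J\rfloor$ in the remaining branches of a), and the floor-free value $\tfrac n2 p_Ip^J$ in the remaining branches of b) (where $N$ turns out always even), giving ii).

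For iii) and iv) I would specialise. Taking $I=\varnothing$ and $J=\{j_1,j_2\}$ with $j_1\neq j_2$, parts i) and ii) both deliver the floor-free value $\tfrac n2 p^J=\tfrac n2\bigl(1-\tfrac1{p_{j_1}}\bigr)\bigl(1-\tfrac1{p_{j_2}}\bigr)$ once one checks that this rational number is an integer; writing $n=p_{j_1}p_{j_2}t$ reduces this to $2\mid t(p_{j_1}-1)(p_{j_2}-1)$, which is immediate. For $\{j_1,j_2\}=\{1,2\}$ the value equals $g(n)-2$ by the definition \eqref{g} of $g(n)$, and being a cardinality it is a nonnegative integer, so $g(n)$ is a positive integer whenever $r\ge2$. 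For iv) take $J=R$; then $P^R$ is exactly the set of integers in $[1,n]$ coprime to $n$, whence $N=|P^R|=n\,p^R=\varphi(n)$. Since $n/2$ shares a prime factor with $n$ (forcing $\epsilon=0$) and $J=R\neq\varnothing$ (forcing $\eta=0$) when $n$ is even, the master identity gives $\varphi(n)/2$; for $n$ odd part i) gives $\lfloor\varphi(n)/2\rfloor$, which equals $\varphi(n)/2$ because $\varphi(n)$ is even for $n>2$.

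I expect the main obstacle to be the parity bookkeeping in part ii): one must verify, branch by branch according to whether $1\in I$, $1\in J$, or $J=\varnothing$, that $\tfrac12(N-\eta-\epsilon)$ coincides with the advertised expression, which amounts to pinning down the $2$-adic valuation of $N=n\,p_Ip^J$ in each branch. This is elementary but requires care to keep the cases exhaustive and compatible with the classification of $\epsilon$ coming from Lemma~\ref{n:2}.
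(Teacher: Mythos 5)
Your proposal is correct and follows essentially the same route as the paper: the reflection $x\mapsto n-x$ pairing $[1,n/2)$ with $(n/2,n)$, the count $|P_I^J|=n\,p_Ip^J$ from Lemma~\ref{general}, the membership of $n$ (iff $J=\varnothing$) and of $n/2$ (via Lemma~\ref{n:2}), and a branch-by-branch parity check; your ``master identity'' with the indicators $\eta,\epsilon$ is just a compact repackaging of the paper's inequality $\frac{n p_I p^J}{2}-1\leq|A|\leq\frac{n p_I p^J}{2}$ together with its case analysis. The deferred bookkeeping in part ii) resolves exactly as you predict, so no gap remains.
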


\begin{proof}  Let $A=P_I^J\cap (0,n/2)$ and $B=P_I^J\cap (n/2,n).$ Since, trivially, $A=P_I^J\cap [1,n/2),$ our aim is to compute $|A|.$
Consider the bijective map $f:\mathbb{Z}\to \mathbb{Z}$ given by $f(x)=n-x,$ for all $x\in\mathbb{Z}.$ Note that, since
for all $i\in R,$ we have $p_i\mid x$ if and only if $p_i\mid n-x,$ then $f(A)=B.$ In particular $|A|=|B|$.
Moreover, we have
$$
A\cup B\subseteq P_I^J\subseteq A\cup B\cup\{n/2,n\}
$$
and thus, by Lemma \ref{general}, observing that $A\cap B=\varnothing,$
$$
\frac{np_Ip^J}{2}-1\leq |A|\leq \frac{np_Ip^J}{2},
$$
where the bounds are not necessarily integers.

To decide the exact value of $|A|$ we must take in account when $n,n/2\in P_I^J$. It is
easy to see that $n\in  P_I^J$ if and only if $J=\varnothing$. For the
case of $n/2$ we invoke Lemma \ref{n:2}, and we distinguish several cases.

Assume first that $n$ is odd. If $J=\varnothing,$ we have $n\in P_I^J=P_I$ while $n/2$ is not an integer.
Hence in this case $2|A|+1=|P_I|=np_I$  which implies that $p_I$ is odd and
$|A|=\lfloor \frac{np_Ip^J}{2}\rfloor,$ since $p^J=1$.
If $J\neq \varnothing,$ then $n,n/2\notin  P_I^J$ and thus $2|A|=|P_I^J|=np_Ip^J,$ which
implies that $p_Ip^J$ is even and $|A|=\frac{np_Ip^J}{2}=\lfloor \frac{np_Ip^J}{2}\rfloor.$
In particular we have now proved part i), and we have also proved parts iii) and iv) for $n$ odd.

Next assume that $n$ is even but not divisible by $4$.
If $J=\varnothing$ and $I\subseteq \{2,\dots,r\}$, then both $n, n/2\in P_I^J$, so that $2|A|+2=np_Ip^J$
and hence $np_Ip^J$ is even and $|A|=\frac{np_Ip^J}{2}-1.$
If $J=\{1\}$, then $n/2\in  P_I^J$ but $n\not\in  P_I^J$, and so $2|A|+1=np_Ip^J$ which
implies that $np_Ip^J$ is odd and $|A|=\lfloor \frac{np_Ip^J}{2}\rfloor.$
In the remaining cases either (1) $J=\varnothing$ and $1\in I$, or (2) $J$ is neither $\varnothing$ nor $\{1\}$.
In both of these cases $n/2\notin P_I^J$; in case (1) we have $n\in  P_I^J$ so that $2|A|+1=|P_I^J|$
and in case (2) we have $n\not\in  P_I^J$ so that $2|A|=|P_I^J|$. Thus in both cases
$|A|=\lfloor \frac{np_Ip^J}{2}\rfloor$ and part ii)a) is proved as well as part iv) with $n\equiv 2\pmod{4}$: note that the condition $n>2$ in iv) guarantees
that $ \frac{np^R}{2}$ is an integer.
The particular situation with $I=\varnothing$ and $|J|=2$ arises in case (2) and here we have
$|A|= \frac{np_Ip^J}{2}$, which proves part iii) for $n\equiv 2\pmod{4}$.

Finally assume that $n$ is divisible by $4.$ If $J=\varnothing$, then both $n, n/2\in P_I^J$ and
$2|A|+2=np_Ip^J;$ so $np_Ip^J$ is even and $|A|=\frac{np_Ip^J}{2}-1.$ If $J\neq \varnothing$,
then neither $n$ nor $n/2$ lies in $P_I^J$ and hence $2|A|=np_Ip^J, $ so that  $np_Ip^J $ is even
and $|A|=\frac{np_Ip^J}{2}.$ Thus part ii)b) is proved as well as parts iii) and iv) for $n$ divisible by $4$.

\end{proof}
\subsection{Maximal subgroups}\label{maximal}

Let $G=S_n, A_n.$ As mentioned in Remark \ref{rem1}~(a), in order to determine $\gamma(G)$ we may assume that the basic components of a
normal covering for $G$ are maximal subgroups of $G.$ These subgroups may be intransitive,
primitive or imprimitive. Each maximal subgroup of $A_n$ which is
intransitive or imprimitive is obtained as $M\cap A_n$, where $M$ is maximal in $S_n$ and respectively intransitive or imprimitive. Thus to give an overview of the maximal subgroups of $G$ which are intransitive or imprimitive, it is enough to describe them for $G=S_n.$

The set of maximal subgroups of $S_n$ which are
intransitive is given, up to conjugacy, by
\begin{equation}\label{P}
\mathcal{P}:=\{\ P_x=S_x\times S_{n-x} \  : 1\leq x< n/2\}.
\end{equation}
If $\mathfrak{X}\subseteq \mathbb{N}\cap [1,n/2)$
we use the notation
\begin{equation}\label{PX}
\mathcal{P}_\mathfrak{X}:=\{\ P_x\in \mathcal{P}\ :\ x\in \mathfrak{X}\}.
\end{equation}
The set of imprimitive maximal subgroups of $S_n$ is given, up to conjugacy, by
$$
\mathcal{W}:=\{\ S_b\wr S_{m} \ : 2\leq b\leq n/2, b\mid n, m=n/b \}.
$$
Recall that, for $n$ even, the intransitive subgroup $P_{n/2}= S_{n/2}\times S_{n/2}$
is not maximal in $S_n$ because it is contained in the imprimitive subgroup $S_{n/2}\wr S_2.$

Intransitive subgroups play a major role in normal coverings of the symmetric and alternating group and we conjecture that, apart from at most two components,
each minimal basic set consists of intransitive components.
To be more precise we make the following conjecture.

\begin{con}\label{conjecture-intr-sa}
Let $n$ as in \eqref{factors}, with $r\geq 2.$ Then, for each  minimal
basic set $\delta$ of $S_n$ consisting of maximal subgroups, the subset of intransitive 
subgroups in $\delta$ is precisely
\begin{equation}\label{pminS}
\mathcal{P}_{\min}(S_n):=\{P_x\in \mathcal{P}\ :  \  \gcd(x,p_1p_2)=1\}.
\end{equation}  
If $n$ is even then, for each  minimal
basic set $\delta$ of $A_n$ consisting of maximal subgroups, the subset of intransitive 
subgroups in $\delta$ is precisely
\begin{equation}\label{pminA}
\mathcal{P}_{\min}(A_n):=\{P_x\cap A_n\ :  P_x \in \mathcal{P}_{\min}(S_n) \}.\end{equation}
 \end{con}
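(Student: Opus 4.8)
The plan is to prove each of the two asserted equalities by two inclusions, and to deduce the alternating statement from the symmetric one. Write $J=\{1,2\}$, so that $\gcd(x,p_1p_2)=1$ is exactly the condition $x\in P^{J}$, and recall from Proposition~\ref{arithm}(iii) that $|P^{J}\cap[1,n/2)|=g(n)-2$; hence $|\mathcal{P}_{\min}(S_n)|=g(n)-2$. The symmetric statement then splits into a \emph{necessity} part, that every $P_x\in\mathcal{P}_{\min}(S_n)$ occurs in $\delta$, and an \emph{exclusion} part, that no intransitive $P_y$ with $p_1\mid y$ or $p_2\mid y$ occurs in $\delta$. The necessity part is the decisive one: once all $g(n)-2$ members of $\mathcal{P}_{\min}(S_n)$ are shown to lie in $\delta$, together with the fact that at least two further, necessarily transitive, components are required to cover the $n$-cycles, one already obtains $\gamma(S_n)\ge g(n)$, i.e.\ Conjecture~\ref{conjecture-sym}; so this part is exactly as hard as the open lower bound.

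For the necessity part I would attach to each admissible $x$ a \emph{witness} permutation and force the component that alone can cover it. The cleanest witnesses are the bicyclic elements $\sigma_x$ of cycle type $(x,\,n-x)$: their cycle lengths admit only the subset sums $x$ and $n-x$, so among the intransitive maximal subgroups in $\mathcal{P}$ the element $\sigma_x$ lies only in conjugates of $P_x$ (note $P_x=P_{n-x}$ up to conjugacy). It then suffices to show that $\sigma_x$ avoids every transitive maximal subgroup. For the imprimitive family $\mathcal{W}$ a block-counting argument is decisive: if $\sigma_x$ preserves a system of $m=n/b$ blocks of size $b\ge2$, then each of its two cycles meets a full block-orbit, forcing either $b\mid\gcd(x,n)$ or $m\mid\gcd(x,n)$; hence $\sigma_x$ avoids $\mathcal{W}$ precisely when $\gcd(x,n)=1$. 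For the primitive subgroups I would use Jordan's theorem: when $\gcd(x,n)=1$ the power $\sigma_x^{\,x}$ is a single $(n-x)$-cycle fixing $x$ points, so if $n-x$ is a prime at most $n-3$ any primitive overgroup contains $A_n$, a contradiction; in the remaining cases one perturbs the witness to a cycle type still covered only by $P_x$ but having a power that is a prime cycle of admissible length, invoking the CFSG-based description of primitive groups containing such elements. This forces $P_x\in\delta$ for every $x$ coprime to the whole of $n$, i.e.\ for the $\varphi(n)/2$ values in $P^{R}\cap[1,n/2)$ (Proposition~\ref{arithm}(iv)).

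For the exclusion part I would argue by minimality: any intransitive $P_y$ with $p_1\mid y$ or $p_2\mid y$ must be redundant, because each permutation that could force its inclusion is already covered either by the components of $\mathcal{P}_{\min}(S_n)$ or by the two transitive components, so minimality of $\delta$ rules it out. The alternating case for even $n$ would then follow through the reduction of Remark~\ref{rem1}(b): the bicyclic witnesses $\sigma_x$ are even (since $n-2$ is even), hence lie in $A_n$, and intersecting a symmetric basic set with $A_n$ transports $\mathcal{P}_{\min}(S_n)$ to the family \eqref{pminA}; the parity-specific bookkeeping, in particular the status of $n/2$ and the non-maximality of $P_{n/2}$, is controlled by Lemma~\ref{n:2} and Proposition~\ref{arithm}(ii).

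The main obstacle is the gap between the set $\mathcal{P}_{\min}(S_n)$, of size $g(n)-2$, and the subset $P^{R}\cap[1,n/2)$, of size $\varphi(n)/2$, that the bicyclic witnesses reach. These coincide exactly when $r=2$, which is why the statement is a theorem in that case; but for $r\ge3$ one must still force $P_x$ for every $x$ that is coprime to $p_1p_2$ yet divisible by some $p_k$ with $k\ge3$. For such $x$ one has $\gcd(x,n)\ge p_k>1$, so the bicyclic witness $\sigma_x$ is swallowed by an imprimitive subgroup and no longer forces $P_x$; producing, uniformly in all these $x$, a witness cycle type that simultaneously evades every member of $\mathcal{W}$ and every primitive maximal subgroup is the crux, and is presently available only for the special congruence families of Theorem~\ref{thm1}. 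The primitive evasion rests on the classification of finite simple groups and fragments into many subcases, while the imprimitive evasion is genuinely delicate precisely when $\gcd(x,n)>1$; it is the absence of such a uniform construction that keeps the statement a conjecture rather than a theorem.
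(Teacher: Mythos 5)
There is a fundamental mismatch here: the statement you are proving is Conjecture~\ref{conjecture-intr-sa}, which the paper explicitly poses as an \emph{open conjecture} and does not prove. There is no ``paper's own proof'' to compare against, and your text is a strategy outline rather than a proof --- indeed you concede as much in your final paragraph. The decisive gap is exactly where you locate it: your bicyclic witnesses $[x,n-x]$ force $P_x\in\delta$ only when $\gcd(x,n)=1$, i.e.\ for the $\varphi(n)/2$ values of Proposition~\ref{arithm}(iv), whereas $\mathcal{P}_{\min}(S_n)$ has $g(n)-2$ members, the surplus being the $x$ coprime to $p_1p_2$ but divisible by some $p_k$ with $k\geq 3$. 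For those $x$ the witness $[x,n-x]$ lies in $S_{d}\wr S_{n/d}$ for $d=\gcd(x,n)>1$, and no uniform replacement witness is known; this is precisely why the paper can only verify the resulting count for special families (Theorem~\ref{thm1}), using hand-crafted $3$- and $4$-part partitions ($Z$, $X$, $U$, $V$ in Examples~\ref{Z} and~\ref{U,V}) under congruence hypotheses on $q$. Note moreover that even in those families the paper proves only the \emph{numerical} conclusion $\gamma=g(n)$, not the structural claim of the conjecture: in the proof of Theorem~\ref{thm1}(a) the case $I_q\in\delta$ is not excluded, so it is not even established there that every minimal basic set contains all of $\mathcal{P}_{\min}(S_n)$.

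Beyond the central gap, three steps of your outline would not survive scrutiny. First, the primitive exclusion: Jordan-type results (Lemma~\ref{extract}) apply to $\sigma_x^{\,x}$ only under length restrictions, and your proposal to ``perturb the witness'' destroys the property that only $P_x$ covers it; the paper instead invokes M\"uller's CFSG-based classification \cite{M} (Lemma~\ref{Muller}) to eliminate all primitive overgroups of elements with type in $\mathcal{T}$ at once, but only for $n$ odd and not a proper power, and with $[1,n-1]$ as a genuine exception when $n$ is even. Second, the ``exclusion part'' is asserted, not argued: to show a $P_y$ with $p_1p_2\nmid\gcd(y,p_1p_2)\ldots$ sorry, with $\gcd(y,p_1p_2)>1$ is redundant you would need to know the full transitive part of $\delta$, which is exactly what is unknown. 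Third, the reduction of the $A_n$ statement to the $S_n$ statement via Remark~\ref{rem1}(b) is invalid for this purpose: intersecting a basic set of $S_n$ with $A_n$ produces \emph{some} basic sets of $A_n$, but says nothing about the shape of \emph{every} minimal basic set of $A_n$; the paper accordingly treats $A_n$ directly (Theorem~\ref{thm1}(b)), and the only consequence the paper actually derives from Conjecture~\ref{conjecture-intr-sa} is Proposition~\ref{conjectures}, which runs in the opposite direction to your argument.
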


Note that in Conjecture~\ref{conjecture-intr-sa} we do not exclude $n=p_1p_2.$
%
We will see in Proposition \ref{upper} that, when $r\geq 2$, also two imprimitive subgroups play a role in the normal
coverings of $S_n$ and $A_n$.

\subsection{Partitions}
Let $n,k\in\mathbb{N}$, with $k\leq n$. A $k$-{\em partition} of $n$ is an unordered $k$-tuple
$T=[x_1,\dots,x_k]$, with $x_i\in\mathbb{N}$ for each $i\in
\{1,\dots,k\},$ such that $n=\sum_{i=1}^{k}x_i.$
We sometimes simply refer to a $k$-partition as a partition.
The $x_i$ are called the {\em terms} of the $k$-partition. 
Let $\sigma\in S_n$  and let $X_1,\dots, X_k$,
 with $ k\in\mathbb{N},$  be the orbits of $\langle\sigma\rangle.$
Let $x_i=|X_i|\in\mathbb{N}$. Note that the fixed points of $\sigma$ correspond to the $x_i=1,$ while the
lengths of the cycles in which $\sigma$ splits are given by the $x_i\geq 2.$
Then $\sum_{i=1}^kx_i=n$ and we say that $T=[x_1,\dots, x_k]$ is the {\it partition associated} to $\sigma$
or the {\em type} of $\sigma.$ For instance if $\sigma=(123)\in S_4$, then $X_1=\{1,2,3\}, X_2=\{4\}$ and
the type of $\sigma$ is $[1,3].$ Since permutations in $S_n$ are conjugate if and only if they have the
same type, we can identify the conjugacy classes of permutations of $S_n$ with the partitions of $n$ (for more details see Section 1.1 in \cite{BPS}). When a subgroup $H$ of $S_n$ contains a permutation of
type $T$ we say that `$T$ belongs to $H$' and we write $T\in H.$

These concepts are crucial for our research: for a set $\delta$ of subgroups of $S_n$ is a basic set if
and only if, for each partition $T$ of $n,$ there exists $H\in\delta$ such that $T$ belongs to $H.$

In particular the following set of $2$-partitions will be important for our work:

\begin{equation}\label{T}
\mathcal{T}:=\{T_x=[x,n-x] : 1\leq x< n/2\}.
\end{equation}
Note that, when $n$ is even, we exclude the partition $[n/2,n/2].$
The partitions in $\mathcal{T}$ correspond to the simplest types of permutations in $S_n$ apart from $n$-cycles, and allow us to
identify most of the intransitive components in a basic set. The counting problems arising are very easily managed by the following remark.

\begin{remark}\label{bijection}{\rm 
The maps $f:\mathbb{N}\cap [1,n/2)\to \mathcal{T},\  f(x)=T_x$ and $F:\mathcal{T}\to \mathcal{P},\ F(T_x)=P_x$,
with $\mathcal{P}$ and $\mathcal{T}$ as in \eqref{P} and \eqref{T},  are bijections and, for each $x$,  $P_x$ is
the only subgroup in $\mathcal{P}$ which contains a permutation of type $T_x\in \mathcal{T}.$
In particular
 $|\mathcal{T}|=|\mathcal{P}|=\lfloor\frac{n-1}{2}\rfloor$, and for each  $\mathfrak{X}\subseteq \mathbb{N}\cap [1,n/2)$ we have
  $|\mathcal{P}_\mathfrak{X}|=|\mathfrak{X}|,$ with $\mathcal{P}_\mathfrak{X}$ as in (\ref{PX}).
  }
\end{remark}

The subset of $\mathcal{T}$ given by
\begin{equation}\label{Apa}
\mathcal{A}:=\{T_x\in \mathcal{T}\ : x\in \mathfrak{A}\},
\end{equation}
where
\begin{equation}\label{Anu}
\mathfrak{A}:=\{x\in\mathbb{N}: 1\leq x< n/2,\  \gcd(x,n)=1\}
\end{equation}
and  the corresponding set of intransitive components
 $\mathcal{P}_{\mathfrak{A}}$
play an important role in this paper. Note that, by Proposition \ref{arithm} iv) and Remark \ref{bijection}, we have
 \begin{equation}\label{order}
|\mathcal{A}|=|\mathfrak{A}|=| \mathcal{P}_{\mathfrak{A}}|=\frac{\varphi(n)}{2}.
\end{equation}

\section{An upper bound}

We produce a normal covering of $S_n$ with a basic set of size $g(n)$  involving
only intransitive and imprimitive subgroups, for any non-prime-power $n$ (even or odd). 

\begin{proposition}  \label{upper}
Let $n$ be as in \eqref{factors} with $r\geq2.$ Then $\gamma(S_n)\leq g(n)\ \mbox{and}\ \gamma(A_n)\leq g(n).$
\end{proposition}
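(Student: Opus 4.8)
The plan is to exhibit an explicit basic set of size exactly $g(n)$ for $S_n$ consisting only of intransitive and imprimitive maximal subgroups, and then to transfer it to $A_n$. The key counting input is Proposition~\ref{arithm}~iii), which tells us that the number of integers $x$ with $1\le x<n/2$ that are coprime to $p_1p_2$ equals $g(n)-2$; equivalently, with $J=\{1,2\}$, we have $|P^J\cap[1,n/2)|=g(n)-2$. Here I want to cover the partitions of $n$ rather than merely the integers, so I would first set up the correspondence between integers $x\in[1,n/2)$ and the $2$-partitions $T_x=[x,n-x]\in\mathcal{T}$, together with their unique intransitive subgroup $P_x$, exactly as in Remark~\ref{bijection}.

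First I would take as the intransitive part of the basic set the family $\mathcal{P}_{\min}(S_n)=\{P_x : \gcd(x,p_1p_2)=1,\ 1\le x<n/2\}$ from \eqref{pminS}. By Proposition~\ref{arithm}~iii) and Remark~\ref{bijection} this family has size exactly $g(n)-2$. The point of choosing the indices coprime to $p_1p_2$ is that the remaining partitions $T_x$ — those with $x$ divisible by $p_1$ or by $p_2$ — should be caught by two imprimitive subgroups. Concretely, I would add the two imprimitive maximal subgroups $W_1=S_{p_1}\wr S_{n/p_1}$ and $W_2=S_{p_2}\wr S_{n/p_2}$ (these are maximal since $p_i\mid n$, $p_i\le n/2$). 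The claim is that $\delta=\mathcal{P}_{\min}(S_n)\cup\{W_1,W_2\}$, of size $(g(n)-2)+2=g(n)$, is a basic set, which by definition gives $\gamma(S_n)\le g(n)$.

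The heart of the argument is verifying the covering condition: every partition $T$ of $n$ must belong to some member of $\delta$. This is where the main obstacle lies. For a partition $T$ I would argue by cases according to whether $T$ has a proper subsum equal to some $x$ coprime to $p_1p_2$ (in which case $T\in P_x$ for the corresponding intransitive subgroup), and otherwise show that the cycle structure of $T$ must be compatible with a block system of size $p_1$ or $p_2$, so that $T\in W_1$ or $T\in W_2$. The delicate part is the ``otherwise'' case: I need that if no subsum of the parts of $T$ avoids both $p_1$ and $p_2$ (equivalently, every achievable subsum in $[1,n/2)$ is divisible by $p_1$ or by $p_2$), then all parts of $T$ are simultaneously divisible by a common $p_i$, forcing $T$ to admit the required wreath block structure. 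I would expect to handle this via an arithmetic/additive argument about the set of subset sums of the parts, isolating the smallest part and analysing divisibility — this is the step most likely to require care and possibly a separate lemma.

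Finally, for the alternating group I would pass to $A_n$ by intersecting: set $\delta\cap A_n=\{H\cap A_n : H\in\delta\}$. Each $P_x\cap A_n$, $W_1\cap A_n$, $W_2\cap A_n$ is a proper subgroup of $A_n$, and since none of the chosen subgroups is $A_n$ itself, the reasoning of Remark~\ref{rem1}~(b) applies: a partition $T$ lying in $A_n$ corresponds to an even permutation, and if $T$ belongs to some $H\in\delta$ then $T$ belongs to $H\cap A_n$. One must check that these intersections remain pairwise non-conjugate and proper, and that the even partitions are genuinely covered; granting this, $|\delta\cap A_n|\le g(n)$ yields $\gamma(A_n)\le g(n)$, completing the proof.
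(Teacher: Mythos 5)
Your construction is exactly the one the paper uses: the basic set $\delta=\mathcal{P}_{\min}(S_n)\cup\{S_{p_1}\wr S_{n/p_1},\,S_{p_2}\wr S_{n/p_2}\}$, the size count via Proposition~\ref{arithm}~iii) and Remark~\ref{bijection}, and the passage to $A_n$ by intersecting (valid because $A_n\notin\delta$, as in Remark~\ref{rem1}(b)). However, the covering verification --- which you yourself identify as ``the heart of the argument'' --- is not actually carried out: you state the needed dichotomy (either some subsum of the parts is coprime to $p_1p_2$, or all parts are divisible by a common $p_i$) and then defer it to an unspecified ``arithmetic/additive argument about the set of subset sums, isolating the smallest part.'' As written, that is a genuine gap: the claim is true, but nothing in your text establishes it, and the phrasing suggests you expect a more elaborate subset-sum analysis than is required.

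The missing step admits a short direct proof, which is what the paper does. Write $T=[x_1,\dots,x_k]$ and set $K_1=\{i:p_1\nmid x_i\}$, $K_2=\{i:p_2\nmid x_i\}$. If $K_1=\varnothing$ then $T\in S_{p_1}\wr S_{n/p_1}$, and similarly for $K_2$; so assume both are nonempty. If some index lies in $K_1\cap K_2$, that single part $x_i$ is coprime to $p_1p_2$; since $p_1p_2\mid n$, the element of $\{x_i,\,n-x_i\}$ lying in $[1,n/2)$ is again coprime to $p_1p_2$ (and $x_i\neq n/2$ because $p_2\mid n/2$ would force $p_2\mid x_i$), so $T\in P_x$ for some $P_x\in\mathcal{P}_{\min}(S_n)$. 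If instead $K_1\cap K_2=\varnothing$, pick $i\in K_1$ and $j\in K_2$; then $i\neq j$, $p_2\mid x_i$, $p_1\mid x_j$, hence $x_i+x_j$ is coprime to $p_1p_2$ and one argues with the subsum $x_i+x_j$ exactly as before. So a subsum of at most \emph{two} parts always suffices; no induction on subset sums or analysis of the smallest part is needed. With this two-case argument inserted, your proof is complete and coincides with the paper's.
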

\begin{proof}
We claim that the set
$$
\delta=\{P_x\in \mathcal{P} \ :\gcd(x,p_1p_2)=1\}\cup\{S_{p_1}\wr S_{n/p_1},\ S_{p_2}\wr S_{n/p_2}\}
$$
does not involve $A_n$ and is a basic set for $S_n$ of size $g(n)$. The argument is somewhat inspired
by the proof of \cite[Proposition 7.8]{BP}. Note that $p_1\geq 2$ and $p_2\geq 3.$
Consider an arbitrary type $T=[x_1,\dots,x_k]$ of permutations in $S_n$, with each $x_i\in\mathbb{N}$,
$k\geq1$, and $\sum_{i=1}^k x_i=n$. If each $x_i$ is divisible by $p_1$ then $T\in S_{p_1}\wr S_{n/p_1}$
in $\delta$, and if each $x_i$ is divisible by $p_2$ then $T\in S_{p_2}\wr S_{n/p_2}$ in $\delta$.
In particular $\delta$ covers the $n$-cycles. So we may assume that $k\geq 2$  and that both
$K_1=\{i\in\{1,\dots,k\}: p_1\nmid x_i\}$ and $K_2=\{i\in\{1,\dots,k\}: p_2\nmid x_i\}$
are nonempty subsets of $K=\{1,\dots,k\}.$

If $K_1\cap K_2\neq\varnothing$, then there exists $i\in K$ such that  $p_1,p_2\nmid x_i.$
In particular we have $x_i\neq n/2$, as otherwise $p_2,$ which divides $n$, would divide $x_i$.
Let $x$ be the unique natural number in $\{x_i,n-x_i\}$ which is less than $n/2.$ Note that,
since $p_1,p_2\mid n,$ we have $p_1,p_2\nmid x.$ Thus
$T\in P_x$  with $P_x\in \delta.$

Assume next that $K_1\cap K_2=\varnothing:$ then
for all $i\in K$ we have that if $p_1\nmid x_i$ then $p_2\mid x_i$ and that if $p_2\nmid x_i$
then $p_1\mid x_i.$
Let $i\in K_1$ and $j\in K_2$. Then we have $i\neq j$, and $p_1\nmid x_i,\ p_2\mid x_i$,
and also $p_2\nmid x_j,\ p_1\mid x_j.$ This implies that $p_1\nmid x_i+x_j$
and $p_2\nmid x_i+x_j$, and it follows that $x_i+x_j\neq n, n/2$.
Let $x$ be the unique natural number in $\{x_i+x_j,n-(x_i+x_j)\}$ which is less than $n/2:$ then $p_1,p_2\nmid x$ and so $T\in P_x$ with $P_x\in \delta.$
Now, to get the result for $A_n$, use Remark \ref{rem1}b).
\end{proof}
 We recall that for $n=p_1p_2,$ the inequalities in Proposition~\ref{upper} are  strict since,  by \cite[Proposition 7.6]{BP},  for $n$ odd we have $\gamma(S_{p_1p_2})= g(p_1p_2)-1$ and for $n$ even we have $\gamma(A_{p_1p_2})= g(p_1p_2)-1.$
Note that the basic set
in the proof of Proposition \ref{upper} admits as intransitive components precisely those belonging
to the set $\{P_x\in \mathcal{P}\ :  \  \gcd(x,p_1p_2)=1\}$, in line with Conjecture~\ref{conjecture-intr-sa}.
We can also observe that Conjecture~\ref{conjecture-intr-sa} is stronger of both Conjecture~\ref{conjecture-sym} and Conjecture~\ref{conjecture-alt}
 and that it implies a characterization of the minimal basic sets for $S_n$ when $n$ is odd, and for $A_n$ when $n$ is even. Recall the definitions of
  $\mathcal{P}_{\min}(S_n)$ and $\mathcal{P}_{\min}(A_n)$ given in (\ref{pminS}) and (\ref{pminA}).

\begin{proposition}\label{conjectures} Assume that Conjecture~\ref{conjecture-intr-sa} holds. Let $$\mathcal{W}_i:= \{S_{p_i}\wr S_{n/p_i}, S_{n/p_i}\wr S_{p_i} \}$$ for $i=1,2.$ Then:
\begin{itemize}
\item[i)] Conjecture~$\ref{conjecture-sym}$ and Conjecture~$\ref{conjecture-alt}$ hold.

\item[ii)] If $n$ is odd, with $r\geq 2$ and $n\neq p_1p_2$, then the only minimal basic sets of $S_n$ consisting of maximal subgroups are $\delta=\mathcal{P}_{\min}(S_n)\cup \{I_{p_1},I_{p_2}\},$ where $I_{p_i}\in \mathcal{W}_i,$  for $i=1,2.$

\item[iii)] If $n$ is even, with $r\geq 2$ and $n\neq 2p_2$, then the only minimal basic sets of $A_n$ consisting of maximal subgroups are $\delta=\mathcal{P}_{\min}(A_n)\cup \{I_{p_1}\cap A_n,I_{p_2}\cap A_n\},$ where $I_{p_i}\in  \mathcal{W}_i,$ for $i=1,2.$

\end{itemize}

\end{proposition}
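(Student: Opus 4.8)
The plan is to take Conjecture~\ref{conjecture-intr-sa} as a standing hypothesis and, by Remark~\ref{rem1}(a), to argue throughout with basic sets consisting of maximal subgroups. First I would fix the intransitive part of a minimal basic set. For $S_n$, the conjecture forces any such $\delta$ to meet $\mathcal{P}$ in exactly $\mathcal{P}_{\min}(S_n)$ of \eqref{pminS}; applying Proposition~\ref{arithm}(iii) with $J=\{1,2\}$ together with Remark~\ref{bijection} shows $|\mathcal{P}_{\min}(S_n)|=g(n)-2$. Writing $\delta=\mathcal{P}_{\min}(S_n)\sqcup\mathcal{Q}$, where $\mathcal{Q}$ consists of transitive (primitive or imprimitive) maximal subgroups, and using the upper bound $\gamma(S_n)\le g(n)$ from Proposition~\ref{upper}, minimality already gives $|\mathcal{Q}|\le 2$. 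Thus for part (i) it suffices to prove $|\mathcal{Q}|\ge 2$, and for part (ii) that $\mathcal{Q}$ has the asserted shape; the $A_n$ assertions in (i) and (iii) will be treated identically using the second half of Conjecture~\ref{conjecture-intr-sa}.

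Second, I would identify precisely which types $\mathcal{Q}$ must cover. Rerunning the case analysis in the proof of Proposition~\ref{upper} shows that $\mathcal{P}_{\min}(S_n)$ covers every type $T=[x_1,\dots,x_k]$ possessing at least one part not divisible by $p_1$ and at least one part not divisible by $p_2$. Hence $\mathcal{Q}$ must cover exactly the residual types: those with every part divisible by $p_1$ (call these type~A) together with those with every part divisible by $p_2$ (type~B). The decisive witnesses are $[p_1,n-p_1]$ (type~A) and $[p_2,n-p_2]$ (type~B): since $p_i\mid n$ these are genuine types~A and~B, a short subset-sum check shows neither lies in any member of $\mathcal{P}_{\min}(S_n)$, and under the hypotheses $n\ne p_1p_2$ (part~ii) or $n\ne 2p_2$ (part~iii) one has $p_i<n/2$, so both are admissible $2$-partitions.

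The crux, which I expect to be the main obstacle, is the characterization: \emph{every transitive maximal subgroup $M$ of $S_n$ containing the type $[p_i,n-p_i]$ lies in $\mathcal{W}_i$.} I would prove this in two stages. For imprimitive $M=S_b\wr S_{n/b}$ one writes each cycle length as a product $d\cdot e$ with $d$ a block-cycle length and $e\le b$; testing the required cycle of length $n-p_i=p_i(n/p_i-1)$ against this constraint should force $b\in\{p_i,\,n/p_i\}$, i.e.\ $M\in\mathcal{W}_i$. The delicate part is excluding the primitive maximal subgroups $M\ne A_n,S_n$: here I would pass to a suitable power of the element of type $[p_i,n-p_i]$ to produce a long cycle fixing several points and invoke Jordan/Marggraf-type theorems (or, more uniformly, the O'Nan--Scott classification with order bounds for primitive groups) to rule such $M$ out for all admissible $n$ outside a finite list of small exceptional degrees (matching the exceptions in the Conjectures, e.g.\ $n=12$). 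Granting this, whichever member of $\mathcal{Q}$ covers $[p_1,n-p_1]$ lies in $\mathcal{W}_1$ and whichever covers $[p_2,n-p_2]$ lies in $\mathcal{W}_2$; since $\mathcal{W}_1\cap\mathcal{W}_2=\varnothing$ is equivalent to $n\ne p_1p_2$ (the value $n=p_1p_2$ being exactly when $n/p_1=p_2$ makes the two sets coincide), these are distinct, so $|\mathcal{Q}|\ge 2$, hence $|\mathcal{Q}|=2$. With the upper bound this yields $\gamma(S_n)=g(n)$, proving Conjecture~\ref{conjecture-sym}, and for $n$ odd it forces $\mathcal{Q}=\{I_{p_1},I_{p_2}\}$ with $I_{p_i}\in\mathcal{W}_i$, which is part (ii).

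Finally I would handle $A_n$ and the converse. Part (iii) runs in parallel from $\mathcal{P}_{\min}(A_n)$ of \eqref{pminA}, which again has cardinality $g(n)-2$ because $H\mapsto H\cap A_n$ is injective on the maximal subgroups concerned; no parity difficulty arises for the chosen witnesses, since for even $n$ both $[p_1,n-p_1]$ and $[p_2,n-p_2]$ are even permutations, and the excluded value is $n=2p_2$, i.e.\ again $n=p_1p_2$. For the converse directions of (ii) and (iii) it remains to verify that each $\delta$ of the stated form is a genuine basic set not involving $A_n$: concretely, that \emph{both} members of each $\mathcal{W}_i$ contain every partition of $n$ all of whose parts are divisible by $p_i$. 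This is a routine wreath-product cycle computation — realizing such a partition $[p_iy_1,\dots,p_iy_s]$ inside $S_{n/p_i}\wr S_{p_i}$ via a single top $p_i$-cycle with $\sum e_j = n/p_i$, and inside $S_{p_i}\wr S_{n/p_i}$ by grouping the $n/p_i$ blocks into sizes $y_1,\dots,y_s$ each carrying one cycle — and for the second wreath product it is already recorded in the proof of Proposition~\ref{upper}.
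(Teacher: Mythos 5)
Your overall architecture matches the paper's: fix the intransitive part as $\mathcal{P}_{\min}$ of size $g(n)-2$ via Proposition~\ref{arithm}(iii), bound the transitive remainder $\mathcal{Q}$ by $2$ using Proposition~\ref{upper}, and force $|\mathcal{Q}|=2$ with the witnesses $[p_i,n-p_i]$. But the step you yourself flag as the crux contains a genuine gap. To exclude a primitive component $M$ (other than $A_n$) containing an element $\sigma$ of type $[p_i,n-p_i]$, you propose to pass to a power of $\sigma$ of type $[m,1,\dots,1]$ and apply a Jordan/Marggraf-type theorem. This cannot work for these witnesses: since $p_i\mid n$ we have $p_i\mid n-p_i$, so the order of $\sigma$ is $n-p_i$ and every power of $\sigma$ that kills the $p_i$-cycle (i.e.\ every exponent divisible by $p_i$) also splits the long cycle into $\gcd(k,n-p_i)\geq p_i$ cycles; conversely any exponent coprime to $n-p_i$ is coprime to $p_i$ and preserves both cycles. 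No nontrivial power has type $[m,1,\dots,1]$ with $m\geq 2$, so Lemma~\ref{extract} never applies. The fallback you mention (O'Nan--Scott plus order bounds, ``outside a finite list of exceptions'') is not a proof and would in any case not yield the proposition for all $n$ as stated. What is actually needed here is M\"uller's classification \cite[Theorem~3.3]{M} of primitive groups containing a permutation with exactly two cycles, which the paper invokes directly: since $n$ is not a proper power, $n\neq p_1p_2$ and $k=p_i$ is prime, no primitive group other than $A_n,S_n$ occurs. This deep input cannot be replaced by the elementary device you describe.

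A second gap concerns Conjecture~\ref{conjecture-sym} for $n$ even, which is part of (i) but is never addressed in your argument. For $n$ even both witnesses $[p_1,n-p_1]$ and $[p_2,n-p_2]$ are \emph{even} permutations, so if $A_n\in\delta$ (and $A_n$ is a transitive maximal subgroup of $S_n$, explicitly excluded from your characterization) then $A_n$ alone covers both witnesses and your count gives only $|\mathcal{Q}|\geq 1$, i.e.\ $\gamma(S_n)\geq g(n)-1$. The paper closes this case with the additional odd type $[2,2,n-4]$, which lies neither in $A_n$ nor in any $P_x$ with $\gcd(x,p_1p_2)=1$ (every subset sum of its parts is even), forcing one further component. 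You need some such extra witness; without it the proof of (i) is incomplete for even $n$. Your remaining points --- the identification of the residual types, the block-counting argument for imprimitive subgroups (which is Section~\ref{case k=2}-style and is fine), and the observation that the converse direction of (ii) and (iii) requires checking that both members of each $\mathcal{W}_i$ really contain all relevant types (a verification the paper itself leaves implicit) --- are sound.
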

\begin{proof} Let $n\in\mathbb{N}$ with $r\geq 2$ and $n\neq p_1p_2.$ Let
$\delta$ be a minimal basic set for $G$ consisting of maximal subgroups. Since Conjecture~\ref{conjecture-intr-sa} holds by assumption, 
the intransitive components in $\delta$ are precisely the $g(n)-2$ subgroups in $\mathcal{P}_{\min}(G)$. Moreover, by Proposition \ref{upper}, we know that $|\delta|\leq g(n)$.
Consider the types $T_i=[p_i,n-p_i]$ for $i=1,2.$ Observe that, by assumption, we have $n>p_1p_2$ and therefore also $p_i<n-p_i$ for $i=1,2.$
Note that the types $T_i$ belong to $A_n$ if and only if $n$ is even, and that
 they do not belong to any subgroup in $\mathcal{P}_{\min}(G).$  Let first $G=S_n,$ with $n$ odd or $G=A_n,$ with $n$ even.
 Thus there exists a transitive subgroup $H_1$ maximal in $G$, with $H_1\in\delta$ and $H_1$ containing $T_1$. Note that $H_1\neq A_n$ because $G=S_n$ is considered only for $n$ odd.
 Assume that $H_1$ is primitive and examine the list of the primitive subgroups  of $S_n$ containing a permutation of type $[k,n-k]$ with $k=p_i<n/2,$
 in Theorem 3.3 of \cite{M}.
Since $n$ is not a proper power, $n\neq p_1p_2$ and $k$ is a  prime, there is no such primitive subgroup $H_1$.  
It follows that $H_1$ is imprimitive and the only possibility is $H_1=I_{p_1}\cap G$, for some $I_{p_1}\in \mathcal{W}_1.$ Since $T_2=[p_2,n-p_2]$ does not belong to any subgroup in $\mathcal{W}_1,$ the same argument shows that $\delta$ contains also a component $H_2=I_{p_2}\cap G$, for some $I_{p_2}\in \mathcal{W}_2.$ In particular $\gamma(G)=g(n).$
 So we have proved ii) and iii) as well as i) except for $S_n$ with $n$ even. In this 
 last case we consider separately $A_n\notin\delta$ and $A_n\in\delta.$ If $A_n\notin\delta$ then, as above, we need two further components to cover $T_1$ and $T_2$, and we find $\gamma(S_n)=g(n).$ If $A_n\in\delta$ we consider the type $T=[2,2,n-4]$: since $T$ does not belong to $A_n$ or to any subgroup in $\mathcal{P}_{\min}(S_n),$ we conclude that $\delta$ has a further component containing $T$ and thus, again,$\gamma(S_n)=g(n).$

\end{proof}

In other words if Conjecture \ref{conjecture-intr-sa} is true, we have a complete description of those minimal normal coverings consisting of maximal subgroups, both for $S_n$ ($n$ odd) and $A_n$ ($n$ even), with $n$ not a prime power or the product of two primes. 
In particular, in these cases, this rules out any role for primitive
  subgroups in normal coverings.

\section{Imprimitive subgroups containing permutations with at most four cycles of globally coprime lengths}

Let $n\in\mathbb{N}$ be composite, say $n=bm$ where $b\in \mathbb{N}$ with $2\leq b\leq n/2$ and $m=n/b.$ Let $W=S_b\wr S_{m}\in\mathcal{W}$,
a maximal imprimitive subgroup of $S_n$ stabilising a block system $\mathcal{B}$ consisting of
$m$ blocks of size $b$.
Let $\sigma\in S_n$  such that the partition $T=[x_1,\dots, x_k]$ associated with $\sigma$
satisfies $k\leq 4$ and $\gcd(x_1,\dots, x_k)=1$, that is to say,
the $x_i$ are `globally coprime'. Note that, if $k=2$, then these partitions are the $2$-partitions in 
the set $\mathcal{A}$ defined in (\ref{Apa}).
  Let $X_1,\dots, X_k$ be the
corresponding $\langle\sigma\rangle$-orbits, and let $\sigma_i=\sigma|_{X_i}$ for each $i$.
In this section we describe how to check if  $W$  contains a
conjugate of $\sigma,$ that is, a permutation of type $T$, echoing some ideas in the proof of
\cite[Lemma 4.2]{BPS}. 

For $i\in\{1,\ldots,k\},$ let $\mathcal{B}_i=\{B\in \mathcal{B}\mid B\cap
X_i\neq \varnothing\}$. Observe that  $\langle\sigma_i\rangle$ acts
transitively on $\mathcal{B}_i$ and that the action of $\sigma_i$ on
$\mathcal{B}_i$ is equivalent to the action of $\sigma$ on
$\mathcal{B}_i$. It follows that $d_i=|B\cap X_i|$ is independent of the choice of
$B\in\mathcal{B}_i:$ in particular $d_i\mid x_i=|X_i|$ and, of course, $1\leq d_i\leq b.$
Moreover, since the orbits $X_i$ form a partition of $\Omega=\{1,\dots,n\},$ we also have $\mathcal{B}_i\cap
\mathcal{B}_j=\varnothing$ or $\mathcal{B}_i=\mathcal{B}_j$, for
distinct $i,j\in \{1,\ldots,k\}$, and $\cup_{i=1}^k \mathcal{B}_i=\mathcal{B}.$

In addition, the assumption $\gcd(x_1,\ldots,x_k)=1$ leads to more restrictions on $\mathcal{B}.$
For example,  we cannot have $\mathcal{B}_i=\mathcal{B}$ for some $i\in
\{1,\ldots,k\}$, since this would imply that $\mathcal{B}=\mathcal{B}_i$ for all $i$ and hence that
$|\mathcal{B}|=m$ divides $\gcd(x_1,\ldots,x_k)$. Similarly we cannot have
$\mathcal{B}_i\cap \mathcal{B}_j=\varnothing$ for all distinct $i, j\in\{1,\ldots,k\}$,
since this would imply that $|B|=b$ divides $\gcd(x_1,\ldots,x_k)$.

\subsection{The case k=2}\label{case k=2}
As a first consequence of the comments above, we obtain that 
there exists no maximal imprimitive subgroup of $S_n$ containing a permutation of type
$T=[x,n-x]\in\mathcal{A}$: for we showed above that we must have either
$\mathcal{B}_1\cap \mathcal{B}_2 =\varnothing $ or $\mathcal{B}_1=\mathcal{B}_2=\mathcal{B},$
and the previous paragraph argues that neither of these is possible.

\subsection{The case k=3}\label{k=3}
Let now $k=3$, that is, $T=[x_1,x_2,x_3]$ with $\gcd(x_1,x_2,x_3)=1$. From the
previous paragraphs, up to relabeling,  we may assume that
$\mathcal{B}_1=\mathcal{B}_2$ and that $\mathcal{B}_3\cap
\mathcal{B}_1=\varnothing$.
Thus there is just one pattern to examine:
\begin{equation}\label{3a}
\begin{array}{|c|c|}
\hline
       &    x_1\\ \cline{2-2}
x_3 &  \\
       &  x_2\\\hline
\end{array}
\end{equation}

Here $X_3$ is a union of blocks while $X_1, X_2$ share the same set of blocks.
In particular $b=d_3=d_1+d_2$ divides $\gcd(x_3,n).$ The number of blocks in $X_3$ is  $x_3/b$
and the remaining $m-x_3/b$ blocks meet both $X_1$ and $X_2$ so that $\frac{n-x_3}{b}\mid\gcd(x_1,x_2).$

The two conditions $b\mid \gcd(x_3,n)$ and $\frac{n-x_3}{b}\mid\gcd(x_1,x_2)$ are quite strong
and can often be used to prove that $T$ does not belong to $W$.
The following examples play a role in the proof of Theorem \ref{thm1}.
\begin{example}\label{Z} No permutation of  type $Z=[3, q,14q-3]$ or $X=[10,4q,11q-10]$
belongs to an imprimitive subgroup of $S_{15q}$, for $q\geq 7$ prime.
\end{example}
\begin{proof} Clearly the terms in $Z$ are pairwise coprime. In particular they are globally coprime.
Due to the relabeling we made in our argument above, we must examine the unique possible pattern (\ref{3a}) for each of the three choices of $x_3$ in $\{3, q,14q-3\}.$ For each choice,
we get $\frac{15q-x_3}{b}\mid\gcd(x_1,x_2)=1$ so that $b=15q-x_3$ must divide $n$,
which is not possible for any of the choices for $x_3$.

Now consider the partition $X$. 
If $x_3=10,4q$, then $b \mid \gcd(x_3,15q)$ implies that $b=5,q$, against the fact that $\frac{n-x_3}{b}=3q-2, 11$
does not divide $\gcd(x_1,x_2)=1,10$, respectively. Hence $x_3=11q-10$ which implies that
 $\frac{n-x_3}{b}=\frac{4q+10}{b}$ divides $\gcd(x_1,x_2)=2$; so $b=2q+5$ or $4q+10$,
neither of which divides $n=15q$.
%
 \end{proof}

 \subsection{The case k=4}
Let us explore finally $k=4$, that is  $T=[x_1,x_2,x_3,x_4]$ with  $\gcd(x_1,\ldots,x_4)=1$. From the
previous paragraphs, up to relabeling,  we may assume that
$\mathcal{B}_1=\mathcal{B}_2$ and that $\mathcal{B}_3\cap
\mathcal{B}_1=\mathcal{B}_3\cap\mathcal{B}_2=\varnothing$.
Thus there are three cases (a), (b), (c) to examine, each characterized by some
additional conditions and giving rise to a different pattern.
\begin{itemize}
\item[(a)] $\mathcal{B}_3\cap\mathcal{B}_4=\varnothing, \mathcal{B}_1=\mathcal{B}_4.$
\begin{equation}\label{4a}
\begin{array}{|c|c|}
\hline
       &    x_1\\ \cline{2-2}
x_3 &  \\
       &  x_2\\ \cline{2-2}
       & x_4\\\hline
\end{array}
\end{equation}
Here $X_3$ is a union of blocks while $X_1,X_2,X_4$ share the same blocks. In particular
$b=d_3=d_1+d_2+d_4\mid \gcd(x_3,n).$ The number of blocks in $X_3$ is $x_3/b$ and the remaining
$m-x_3/b$ blocks meet each of $X_1,X_2, X_4$ so that $|\mathcal{B}_1|=\frac{n-x_3}{b}\mid\gcd(x_1,x_2, x_4).$
\medskip

\item[(b)] $\mathcal{B}_3\cap\mathcal{B}_4=\varnothing, \mathcal{B}_4\cap\mathcal{B}_1=\varnothing,
\mathcal{B}_4\cap\mathcal{B}_2=\varnothing.$

\begin{equation}\label{4b}
\begin{array}{|c|c|c|}
\hline
       & &    x_1\\ \cline{3-3}
x_3 &x_4  &  \\
       & &  x_2\\\hline
\end{array}
\end{equation}
Here $X_3, X_4$ are disjoint unions of blocks while $X_1,X_2$ share the same blocks.
In particular $b=d_3=d_4=d_1+d_2\mid \gcd(x_3,x_4,n).$ The number of blocks contained
in  $X_3\cup X_4$ is given by $(x_3+x_4)/b$ and the remaining $m-(x_3+x_4)/b$ blocks
meet both $X_1,X_2.$ Therefore $\frac{n-x_3-x_4}{b}\mid\gcd(x_1,x_2).$
\medskip
\item[(c)] $\mathcal{B}_3=\mathcal{B}_4, \mathcal{B}_4\cap\mathcal{B}_1=\varnothing, \mathcal{B}_4\cap\mathcal{B}_2=\varnothing.$ \begin{equation}\label{4c}
\begin{array}{|c|c|}
\hline
  x_3     &    x_1\\ \cline{2-2}
   & \\ \cline{1-1}
x_4 &  \\
       &  x_2\\\hline
\end{array}
\end{equation}
Here $X_3, X_4$ share the same blocks and $X_1,X_2$ share the same remaining blocks.
In particular $b=d_3+d_4=d_1+d_2\mid \gcd(x_3+x_4,n)$. The number of blocks contained
in $X_3\cup X_4$ is  $|\mathcal{B}_3|=\frac{x_3+x_4}{b}$ and divides $\gcd(x_3,x_4)$. Moreover
 $\frac{n-x_3-x_4}{b}$ divides $\gcd(x_1,x_2)$, the quotient giving the number of blocks contained in $X_1\cup X_2.$

\end{itemize}
The arithmetic conditions deduced in these cases are also often sufficiently strong to prove that $T$
does not belong to $W$. 
Anyway, if necessary, we can strengthen these conditions excluding some congruences.
The two following examples  play a role in the proof of Theorem \ref{thm1}.

\begin{example}\label{U,V}
Let $q\geq 7$ be a prime. Then:
\begin{itemize}\item[i)] no permutation of  type $U=[5,q-5,10q+5, 4q-5]$ belongs to an imprimitive
subgroup of $S_{15q};$
\item[ii)] if  $q\geq 11$ and $q\not\equiv 12 \pmod{13}$, then no permutation of  type $$V=[q-7,q+7,6q-7, 7q+7]$$ belongs
to an imprimitive subgroup of $S_{15q}.$
\end{itemize}
\end{example} 
\begin{proof}

To begin with note that for both $U$ and $V$ the greatest common divisor of their terms is $1$
so that these partitions are within the category we are exploring in this section.
Let $W=S_b\wr S_{m}$, with $2\leq b\leq n/2$ a proper divisor of $n$ and $m=n/b.$
Thus $b, m\in A=\{3,5,15,q,3q,5q\}.$

i) Assume first  that $U$ belongs to $W$. We show that each of the three possible  cases (a),(b),(c), corresponding respectively to the patterns
(\ref{4a}),(\ref{4b}),(\ref{4c}), leads to a contradiction.

 Case (a).\quad We must examine each of the possibilities for
$$
x_3\in\{5,q-5,10q+5, 4q-5\}.
$$
Recall that in this context we have $b\mid \gcd(x_3,n)$ and $\frac{15q-x_3}{b}\mid \gcd(x_1,x_2,x_4).$
For each choice of $x_3$ we have $\gcd(x_1,x_2,x_4)=1$, and so the second condition implies that
$b=15q-x_3$. However, by the first condition $b\leq x_3$, and it follows that $2x_3\geq 15q$,
whence $x_3=10q+5$. Thus $b=5q-5$, which does not divide $15q$.

\smallskip
 Case (b).\quad We must examine  each possibility for $(x_3,x_4)$ in
$$
\{(5,q-5),(5,4q-5),(5,10q+5),(q-5,4q-5),(q-5,10q+5), (4q-5,10q+5)\}.
$$
Recall that now we have  $b\mid \gcd(x_3, x_4,15q)$ and $\frac{15q-x_3-x_4}{b}\mid\gcd(x_1,x_2).$

The first condition implies that $\gcd(x_3,x_4)>1$ and is divisible by a prime $p\in\{3,5,q\}$. This implies that
$(x_3,x_4,p)=(5,10q+5,5)$ or $(q-5,4q-5,3)$, and in these cases we find that $b=5$ or $3$ respectively.
The second condition then requires that $q-2$ divides $\gcd(q-5,4q-5)=\gcd(q-5,3)$ or $\frac{10(q+1)}{3}$
divides $\gcd(5,10q+5)=5$, respectively, neither of which is possible.
%

\smallskip
Case (c).\quad Due to the symmetric role played by $\mathcal{B}_3, \mathcal{B}_4$ with
respect to $\mathcal{B}_1, \mathcal{B}_2,$
we need  to examine only the possibilities for
$$
(x_3,x_4)\in\{(5,q-5),(5,4q-5),(5,10q+5)\}.
$$
Recall that here we have $b\mid \gcd(x_3+x_4,n)$ and the number of blocks contained in $X_3\cup X_4$ divides $\gcd(x_3,x_4);$ moreover
$\frac{n-x_3-x_4}{b}\mid \gcd(x_1,x_2).$

For the first two possibilities for $(x_3,x_4)$, the condition $b\mid \gcd(x_3+x_4,n)$ implies that $b=q$.
In the first case the second condition gives $14\mid 4q-5,$ which is impossible since $4q-5$ is odd.
If $(x_3,x_4)=(5,4q-5)$, then $\gcd(5,4q-5)=1$, which implies that $|\mathcal{B}_3|=1$ and so $b=x_3+x_4=4q$,
a contradiction.
Thus $(x_3,x_4)=(5,10q+5)$, and we find that $|\mathcal{B}_3|$ divides $\gcd(5,10q+5)=5$ and cannot be $1$,
as otherwise $b=x_3+x_4=10(q+1)\notin A.$
Hence $|\mathcal{B}_3|=5$ and $b=2(q+1)\notin A.$
\bigskip

ii) The proof follows with a case-by-case argument similar to the proof of part i). The peculiarity  with respect to i) is in case
(c) where, to conclude, we need  the assumption $q\not\equiv 12 \pmod{13}.$ We treat in detail this case, having in mind the corresponding pattern (\ref{4c}).
\smallskip

Case (c).\quad Due to the symmetric role played by $\mathcal{B}_3, \mathcal{B}_4$ with
respect to $\mathcal{B}_1, \mathcal{B}_2,$
we need to examine only the possibilities for
$$
(x_3,x_4)\in\{(q-7,q+7),(q-7,6q-7),(q-7,7q+7)\}.
$$
Here we have $b\mid \gcd(x_3+x_4,n)$ and  
$\frac{n-x_3-x_4}{b}\mid \gcd(x_1,x_2).$
In the first and third cases, the condition $b\mid \gcd(x_3+x_4,n)$ implies that $b=q$. The second condition then implies
that  $13$ divides $\gcd(6q-7,7q+7)=\gcd(q+1,13)$, or that $7$ divides $\gcd(q+7,6q-7)= 1,$ respectively,
neither of which is possible, because of our assumption  $q\not\equiv 12 \pmod{13}$.
Thus $(x_3,x_4)=(q-7,6q-7)$, and the first condition implies that  $b\mid \gcd(7q-14,15q)=\gcd(q-2,15)$
and so $b\in\{3,5,15\}.$ On the other hand we have $\gcd(q-7,6q-7)\mid 5$ and thus $|\mathcal{B}_3|\in\{1,5\};$
if $|\mathcal{B}_3|=1$ then $b=7q\notin A$ and thus $|\mathcal{B}_3|=5, b=\frac{7q-14}{5},$
which is incompatible with $b\in\{3,5,15\}.$
%
%
\end{proof}

\section{Products of  three odd primes}\label{main}

In this section we deal with degrees of the form $n=15q,$ where $q\geq 7$ is a prime. We will see that Conjecture \ref{conjecture-sym} holds for
an infinite family of these degrees.
We begin this section adapting to our scope some deep classical and recent results about primitive groups.

\begin{lemma}\label{Muller}
Let  $q\geq 7$ be a prime. Then no primitive proper subgroup of $S_{15q}$ contains a permutation
with type belonging to $\mathcal{T}$.
\end{lemma}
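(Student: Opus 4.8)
The plan is to argue by contradiction and reduce immediately to the classification of primitive groups containing a two-cycle permutation. Suppose some proper primitive subgroup $G$ of $S_{15q}$ contains a permutation $\sigma$ of type $T_x=[x,15q-x]\in\mathcal{T}$, with $1\leq x<15q/2$. Since $G$ is a proper subgroup we have $G\neq A_{15q},S_{15q}$, so $G$ occurs in the list of primitive groups containing a permutation of type $[k,n-k]$ furnished by Theorem 3.3 of \cite{M}. First I would record the arithmetic features of $n=15q$ that drive the whole elimination: since $q\geq 7$ is prime and distinct from $3$ and $5$, the integer $n=3\cdot 5\cdot q$ is odd, squarefree, has exactly three distinct prime divisors, is not a prime power, and is not a proper power $m^t$ with $t\geq 2$; moreover $n\geq 105$.

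The bulk of the argument is then a case-by-case inspection of the entries of Theorem 3.3 of \cite{M}, matching the degree $n$ of each candidate group against $15q$. The families indexed by a prime-power degree $n=p^d$ are discarded at once, because $15q$ has three distinct prime factors; the families of product-action type, with degree $n=m^t$ for $t\geq 2$, are discarded because $15q$ is squarefree and not a proper power. For the families whose degree is a flexible arithmetic function of the defining parameters --- principally the classical groups acting on projective points, with $n=(q_0^d-1)/(q_0-1)$, and the groups with socle $A_m$ acting on $k$-subsets, with $n=\binom{m}{k}$ --- I would impose the equation (degree)$\,=15q$ and exploit that $q$ is a large prime dividing $n$ exactly once: this forces $q$ to divide one of a small number of factors, reducing each family to finitely many candidate parameter values, which can then be checked directly.

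The step I expect to be the main obstacle is precisely these borderline coincidences, where the degree accidentally factors as $3\cdot 5\cdot q$ for some prime $q$ (for instance $\binom{31}{2}=465=15\cdot 31$). For such a value, matching the degree is not enough: one must return to the actual permutation action and show that no element of $G$ has cycle type of the shape $[x,n-x]$ at all. Here I would use the fact that in these induced actions an element is forced to break into many cycles --- for example, an element of $S_m$ acting on $2$-subsets has a number of cycles bounded below in terms of its cycle structure on points, and can never collapse to just two --- so the two-part type $T_x$ is unattainable. Finally, the finitely many sporadic entries of the list are eliminated by observing that none of their degrees equals $3\cdot 5\cdot q$ for a prime $q\geq 7$, which completes the contradiction and hence the proof.
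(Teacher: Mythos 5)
Your overall strategy --- reducing to the list in \cite[Theorem 3.3]{M} and killing its entries by the arithmetic of $n=15q$ --- is exactly the route the paper takes, and the eliminations you sketch (prime-power degrees, proper-power/product-action degrees, projective degrees $(q_0^d-1)/(q_0-1)$, sporadic degrees) are the right ones. But there is a genuine gap at the very first step: you discard the alternating group with the justification ``since $G$ is a proper subgroup we have $G\neq A_{15q},S_{15q}$''. That reasoning only rules out $S_{15q}$; the group $A_{15q}$ \emph{is} a proper primitive subgroup of $S_{15q}$, so it is one of the groups the lemma is about, and moreover M{\"u}ller's theorem classifies only primitive groups \emph{not containing} $A_n$, so you cannot even invoke the list until $G=A_{15q}$ has been excluded (nor can you discard listed groups that consist entirely of even permutations). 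The missing ingredient is the parity observation that the paper's proof records explicitly: since $n=15q$ is odd, a permutation of type $[x,n-x]$ is a product of an $x$-cycle and an $(n-x)$-cycle (a fixed point counting as a $1$-cycle), so its sign is $(-1)^{(x-1)+(n-x-1)}=(-1)^{n}=-1$; hence no permutation with type in $\mathcal{T}$ lies in $A_{15q}$, or in any subgroup of it. Your proposal never uses the oddness of $n$ anywhere, which is precisely why this case slipped through.

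A smaller point: your fallback for ``borderline coincidences'' such as $\binom{31}{2}=15\cdot 31$ --- returning to the induced action and showing every element splits into many cycles --- is logically sound, but such groups were never candidates: M{\"u}ller's list consists, by definition, of primitive groups that \emph{do} contain a cyclic subgroup with two orbits, and $S_{31}$ on $2$-subsets is not among them (an $m$-cycle already produces $(m-1)/2=15$ orbits there). Once the parity point above is added, and the degree arithmetic is actually carried out (for instance $q_0^2+q_0+1$ is never divisible by $5$, and $q_0+1=15q$ would force the even number $15q-1$ to be a power of $2$, impossible since powers of $2$ are never $\equiv 14 \pmod{15}$), your argument becomes a correct, if more laborious, version of the paper's one-line inspection.
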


\begin{proof}  We simply examine the lists in  \cite[Theorem 3.3]{M}  having in mind that  $n$ is odd
and not a proper power, and that no type in $\mathcal{T}$ can belong to $A_{15q}.$
It is easily checked that no case arises.
\end{proof}

\begin{lemma}\label{GM} Let  $q\geq 7$ be a prime and let $K$ be a primitive subgroup of $S_{15q},$ with
$K\ngeq A_{15q}.$ Then the number of fixed points of each nontrivial permutation in $K$ is less than $9q.$
\end{lemma}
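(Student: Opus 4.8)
The plan is to bound the number of fixed points of a nontrivial element $\sigma$ in a primitive proper subgroup $K$ of $S_{15q}$ (with $K\not\geq A_{15q}$) by exploiting the fact that $15q$ has a rather rigid factorization, and that primitive groups are very constrained. The overall strategy is to split into two regimes: primitive groups that are ``large'' (close to $A_n$ or $S_n$), which are ruled out by the hypothesis $K\not\geq A_{15q}$, and primitive groups that are ``small'', for which I can invoke an explicit upper bound on the maximal number of fixed points of a nontrivial element. The quantity $9q = \tfrac{3n}{5}$ is the target, so I need to show every nontrivial element moves strictly more than $\tfrac{2n}{5}$ points.

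First I would appeal to a bound on the \emph{minimal degree} (equivalently, the maximal number of fixed points) of a primitive group. The classical results of Babai, and the sharper explicit versions available for primitive groups not containing the alternating group, give that a nontrivial element of such a primitive group $K\leq S_n$ fixes at most roughly $n - c\sqrt{n}$ points, or in the stronger form relevant here, that the minimal degree grows linearly in $n$ unless $K$ is of a very restricted type. Concretely, I expect to use a result guaranteeing that for primitive $K$ with $K\not\geq A_n$, the number of fixed points of any nontrivial element is bounded by something like $n/2$ or $2n/3$, with the exceptional primitive groups (product action, affine, almost simple of small degree relative to $n$) handled separately. For $n = 15q$, the exceptional families are pinned down by arithmetic: the degree must be a prime power (affine case) or admit a product decomposition $n = k^t$ or $n=\binom{a}{b}$-type formula, and I would check case-by-case that $15q$ can only realize these in sporadic ways controlled by congruence conditions on $q$.

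The cleanest route would be to combine the classification-based lists already being used in the paper. Since Lemma~\ref{Muller} invokes \cite[Theorem 3.3]{M} to enumerate primitive subgroups of $S_{15q}$ containing special cycle types, I would similarly draw on the structural information about primitive groups of degree $15q$: the affine type requires $15q$ to be a prime power (impossible, as $15q$ has three prime factors); the almost simple and product-action types of degree $15q$ form a short explicit list because $15q = 3\cdot 5\cdot q$ forces the socle and its action to have very few possible shapes. For each surviving type I would estimate the fixed-point count of a nontrivial element directly from the permutation character or from the subdegree structure, showing it stays below $9q$. The main obstacle will be the almost simple primitive groups of degree $15q$ whose point stabilizer has large index but which nonetheless admit elements with many fixed points — these are exactly the cases (large subdegrees, or elements fixing a large ``natural'' subconfiguration) where a crude minimal-degree bound is too weak and a more careful analysis of the specific group and its fixed-point spectrum is needed.

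Consequently, the heart of the argument is the reduction to a finite, explicitly describable list of primitive types of degree $15q$ and the verification, for each, that no nontrivial element fixes $9q$ or more points; the routine calculations (applying $b\mid\gcd$-type arithmetic and fixed-point counts) I would suppress, but the genuinely delicate step is confirming that the potentially troublesome almost simple examples either do not occur for degree $15q$ or, when they do, fail the $9q$ threshold. I would organize the proof so that the affine and product-action cases fall immediately to the factorization of $15q$, leaving a bounded almost simple analysis as the only substantive work.
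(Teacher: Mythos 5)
Your overall strategy --- reduce to a fixed-point-ratio (equivalently, minimal degree) bound for primitive groups not containing $A_n$, with the exceptional primitive types excluded by the arithmetic of $15q$ --- is exactly the route the paper takes, but your write-up stops short of the one step that actually decides the lemma. The paper's proof is two lines: it cites Guralnick--Magaard \cite[Corollary~1]{GM}, whose exceptions occur only when $n$ is a proper power (which $15q$ is not), to conclude that every nontrivial element of $K$ fixes at most $\tfrac{4}{7}\cdot 15q=\tfrac{60}{7}q<9q$ points. The threshold here is $9q=\tfrac{3}{5}n$, so what is needed is a fixed-point ratio strictly below $\tfrac{3}{5}$. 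You hedge between ``something like $n/2$ or $2n/3$'': the first of these ($\tfrac{1}{2}n=7.5q$) would suffice, but the second ($\tfrac{2}{3}n=10q$) would \emph{not}, so as written your argument does not establish the bound. You never verify that an available theorem clears the $\tfrac{3}{5}$ bar, and that verification is the entire content of the lemma.

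The second gap is that you explicitly defer ``the genuinely delicate step'' of analysing almost simple primitive groups of degree $15q$ that might admit elements with many fixed points, calling it the heart of the argument and leaving it undone. With the correct citation there is nothing to analyse: the Guralnick--Magaard corollary already folds all primitive types into the $\tfrac{4}{7}$ bound once $n$ is not a proper power, so no case-by-case examination of socles or subdegrees is required. Your affine and product-action reductions (prime-power and perfect-power conditions on $15q$) are fine as far as they go, but they are subsumed by, and do not substitute for, quoting the precise result with its precise constant.
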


\begin{proof}  By \cite[Corollary 1]{GM}, since $n=15q$ is not a proper power, we find that the number
of fixed points of each permutation in $K$ is less than or equal to $\frac{4}{7} (15q),$ and thus, in
particular, is less than $9q.$
\end{proof}

\begin{lemma} \label{extract} {\em (~\cite[Theorem 13.8]{WI},  \cite[Theorem 4.11]{CA} )}
A primitive group of degree $n$, which contains a permutation of type $[m,1,\dots,1]$ where
$2\leq m\leq n-5,$ contains $A_n$.
\end{lemma}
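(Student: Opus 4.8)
This is a classical theorem and we only outline the strategy; complete proofs appear in \cite[Theorem 13.8]{WI} and \cite[Theorem 4.11]{CA}. Let $\sigma\in G$ be the given permutation of type $[m,1,\dots,1]$, let $\Delta$ be its support, so that $|\Delta|=m$, and let $\Phi=\Omega\setminus\Delta$ be its fixed-point set, so that $|\Phi|=n-m\geq 5$. The key observation is that $\Delta$ is a \emph{Jordan set} for $G$: the subgroup $\langle\sigma\rangle$ lies in the pointwise stabiliser $G_{(\Phi)}$ and acts transitively on $\Delta$, so the pointwise stabiliser of $\Omega\setminus\Delta$ is transitive on $\Delta$. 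Thus $G$ is a primitive group carrying a Jordan set whose complement has at least five points.

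From here the plan is to run the classical Jordan induction. Starting from the basic theorem of Jordan that a primitive group with a proper nontrivial Jordan set is $2$-transitive, one deletes the points of $\Phi$ one at a time; at each step the residual group retains $\Delta$ as a Jordan set and remains primitive, so that the degree of transitivity rises by one. Since $|\Phi|\geq 5$, this bootstraps $G$ to a $5$-transitive group. The classification of the finite $5$-transitive groups then leaves only the possibilities $S_n$, $A_n$, and the Mathieu groups $M_{12}$ and $M_{24}$; but neither Mathieu group contains a single cycle fixing as many as five points, so these are excluded and we conclude that $A_n\leq G$.

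The crux---and the exact reason for the hypothesis $m\leq n-5$---is the passage to high transitivity together with the elimination of the sporadic exceptions: five fixed points are precisely what is needed both to drive the induction up to $5$-transitivity and to rule out $M_{12}$ and $M_{24}$ on cycle-structure grounds. The technically delicate piece of the induction is verifying that primitivity persists as the points of $\Phi$ are removed, which is where the bookkeeping on the size of the complement is spent. I note finally that Wielandt's argument in \cite[Theorem 13.8]{WI} is entirely self-contained, predating and not using the classification of finite simple groups: it replaces the appeal to the list of multiply transitive groups by direct structural estimates within the theory of Jordan groups.
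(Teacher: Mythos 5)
The paper itself offers no proof of this lemma: it is quoted directly from the literature, with \cite[Theorem 13.8]{WI} and \cite[Theorem 4.11]{CA} carrying all the weight. Your sketch of the underlying classical argument is consistent with how those sources proceed (exhibit $\Delta$ as a Jordan set, bootstrap primitivity to high transitivity, then invoke the classification of highly transitive groups), so there is nothing to object to at the level of strategy. Two remarks, however. First, your accounting of the transitivity degree is needlessly lossy: the cycle form of Jordan's theorem (Wielandt's Theorem 13.8) shows that a primitive group containing an $m$-cycle is $(n-m+1)$-fold transitive, so $n-m\geq 5$ yields $6$-transitivity, not merely $5$-transitivity; at that level the classification leaves only $S_n$ and $A_n$, the Mathieu groups never enter, and this is presumably exactly why the hypothesis is $m\leq n-5$ rather than $m\leq n-4$. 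Your detour through the $5$-transitive groups and the cycle structures of $M_{12}$ and $M_{24}$ does work (both have minimal degree too large to admit a single cycle fixing five points, and neither has elements of the requisite orders), but you assert this rather than prove it, and it is avoidable. Second, your closing claim that Wielandt's argument is entirely self-contained and bypasses the list of multiply transitive groups is misleading: Theorem 13.8 of \cite{WI} is a transitivity statement, and for composite $m$ the passage from high transitivity to the conclusion $A_n\leq G$ is precisely where the classification-dependent input (the role of \cite[Theorem 4.11]{CA} in this paper's citation) is required; the genuinely pre-classification results of Jordan and Marggraf cover only prime cycle lengths or cycles of small degree. This does not invalidate your argument, since in the body of the sketch you do invoke the classification explicitly, but the final paragraph should be corrected or removed.
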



\bigskip\noindent
\textit{Proof of Theorem~\ref{thm1}~(a)}\quad
Let $n=15q$, with $q$ an odd prime satisfying  $q\equiv 2 \pmod{15}$ and $q\not\equiv 12 \pmod{13}$, so that $q\geq 17.$
Let $\delta$ be  a minimal basic set for $S_{n}.$ We may assume that each component in $\delta$
is a maximal subgroup of  $S_{n}.$ By Proposition \ref{upper}, $|\delta|\leq4q+2$. We show that also $|\delta|\geq 4q+2.$

We are interested in the set $\mathcal{A}$ defined in (\ref{Apa}) and also in the
following subsets  of $\mathcal{T},$ corresponding to subsets of $\mathfrak{T}:=\mathbb{N}\cap [1,n/2)$ characterized by a suitable coprimality condition. \[
\begin{array}{lll} 
\mathcal{B}=\{T_x\in \mathcal{T}\  : \ x\in \mathfrak{B}\},& & \mathfrak{B}=\{x\in \mathfrak{T}: \ \gcd(x,n)=3\},\\
\mathcal{C}=\{T_x\in \mathcal{T}\  : \ x\in\mathfrak{C} \},& & \mathfrak{C}=\{x\in\mathfrak{T}\ : \ \gcd(x,n)=5\},\\
\mathcal{D}=\{T_x\in \mathcal{T}\  : \ x\in \mathfrak{D} \},& & \mathfrak{D}=\{x\in\mathfrak{T}\ : \ \gcd(x,n)=q\}.\\
\end{array}\]
Their orders are immediate and coincide with that of the corresponding set of intransitive maximal subgroups containing them.
By  equality (\ref{order}), Proposition \ref{arithm}~i) and Remark \ref{bijection} we have:
\[\begin{array}{lll}| \mathcal{A}|=| \mathfrak{A}|=|\mathcal{P}_\mathfrak{A}|=4(q-1),& & | \mathcal{B}|=| \mathfrak{B}|=|\mathcal{P}_\mathfrak{B}|=2(q-1),\\
& &\\
 | \mathcal{C}|=| \mathfrak{C}|=|\mathcal{P}_\mathfrak{C}|=q-1,& & | \mathcal{D}|=| \mathfrak{D}|=|\mathcal{P}_\mathfrak{D}|=4.
\end{array}\]

Note that since $n$ is odd, no permutation of type belonging to  $\mathcal{T}$ lies in $A_n.$ Thus
by Lemma \ref{Muller},  no permutation having type in $\mathcal{T}$ belongs to a primitive subgroup in $\delta$.
In particular this holds for all the types in $\mathcal{A}, \mathcal{B},\mathcal{C},\mathcal{D}.$
By  Section \ref{case k=2}, it follows that the permutations having type in $\mathcal{A}$ belong only to intransitive components and so
$\delta\supseteq \mathcal{P}_\mathfrak{A},$ which gives $|\delta|\geq 4(q-1).$ This means that we need to
force only $6$ further components to finish the proof.

Consider now the permutations having type belonging to $\mathcal{B}$. Since these do not belong to any of
the subgroups in $\mathcal{P}_\mathfrak{A},$ we need other components in $\delta$ to contain them and
those components must be intransitive or imprimitive. If both $S_3\wr S_{5q}\notin \delta$ and
$S_{5q}\wr S_{3}\notin \delta,$ then we would need a further $2(q-1)>6$ intransitive components and thus
$|\delta|>4q+2$, a contradiction. So one of
$S_3\wr S_{5q}$ and $S_{5q}\wr S_{3},$ belongs to $\delta$. Let $I_3$ denote this component  of $\delta$,
and note that, since now $|\delta|\geq 4q-3,$ we need to force only $5$ further components.

As $|\mathcal{C}|=q-1>6, $ looking to the permutations having type belonging to $\mathcal{C}$, and observing that they do not
belong  to any of the subgroups in $\mathcal{P}_\mathfrak{A}$  or to $I_3$, we see that $\delta$ must contain $S_5\wr S_{3q}$ or  $S_{3q}\wr S_{5}.$
 Let  denote with $I_5$  this
additional subgroup in $\delta.$ 

At this point we know that $\delta\supseteq \mathcal{P}_\mathfrak{A}\cup \{I_3,I_5\}$ and
we need to force just $4$ components.
If neither $S_q\wr S_{15}$ nor $S_{15}\wr S_{q},$ belongs to $\delta,$ to cover the permutations
of type belonging to $\mathcal{D}$ we need exactly $4$ additional intransitive components and the proof is complete.
So we may assume that  $I_q\in \delta$, where $I_q$ is one of $S_q\wr S_{15}$ or $S_{15}\wr S_{q}.$

Next suppose that $A_{15q}\in\delta$. We consider the type $U=[5,q-5,10q+5, 4q-5].$ By Example \ref{U,V}
no imprimitive subgroup contains a permutation of type $U.$ Moreover $U\notin P_x$ for all $x\in \mathfrak{A}$
because no term and no sum of two terms in the partition $U$ is coprime to $n=15q$: this follows from
the assumption $q\equiv 2 \pmod{15}$, which implies both $q\equiv 2 \pmod{3}$ and $q\equiv 2 \pmod{5}$
so that $3\mid q-5, 4q-5,q+7, 7q+7$ and $5\mid q-7, 6q-7$. Also $U\notin A_{15q}.$ This means that we need
a further component, say $K$, to cover $U$, and $K$ is either intransitive or primitive.
We now have that $\delta$ contains the subset
$$
\delta':=\{P_x, I_3,I_5, I_q, A_{15q}, K \ :\  x\in\mathfrak{A}\}
$$
of size $4q+1$. Suppose, for a contradiction, that $\delta=\delta'$, and let $\sigma\in K$ have type $U$.
Suppose first that $K$ is primitive: then $\mu=\sigma^{10q+5}\neq id,$ because $3\mid q-5$, due to
$q\equiv 2 \pmod{3}$,  but $3\nmid 10q+5.$ Moreover the number of fixed points of $\mu$ is at least
$10q+10>9q,$ contradicting Lemma \ref{GM}. Thus $K$ is intransitive. To be more precise
$$
K\in P_{U} :=\{P_{5},P_{q},P_{q-5},P_{4q-5}, P_{5q-10},P_{4q},P_{5q-5}\}.
$$
Consider now the type $V=[q-7,q+7,6q-7,7q+7].$ By Example \ref{U,V} no imprimitive subgroup contains
a permutation of type $V$ and, arguing as for $U$,  it is immediately checked that $V \notin P_x$ for all
$x\in \mathfrak{A}.$ On the other hand for each choice of $K$ in  $P_{U}$, we see that $V\notin K.$
In other words it is not possible to cover $V$ by the components in $\delta.$ Thus if $\delta$ contains $A_{15q}$, then
$|\delta|=4q+2$.

Finally suppose that  $A_{15q}\notin\delta.$ We consider  the types $Z=[3,q,14q-3]$ and $X=[10,4q,11q-10].$
Since each term and the sum of each pair of terms in $Z,X$ is divisible by $3$ or by $5$ or by $q$, we have
that both $Z$ and $X$ do not belong to any intransitive component of $\mathcal{P}_{\mathfrak{A}}.$
On the other hand, by Example \ref{Z} they do not belong to any imprimitive subgroup. Suppose that $\delta$
contains a primitive component $H< S_{15q}$ containing either $Z$ or $X$.
If $H$ contains an element $\eta$ of type $Z$, then $\eta^{q(14q-3)}\in H$ is a $3$-cycle, which is impossible by Lemma \ref{extract}, because
$H\neq A_{15q}$.
Thus $H$ contains an element $\theta$ of type $X$; but then $\theta^{40q}$ is a $(11q-10)$-cycle and,
since $H\neq A_{15q},$ this is impossible again by Lemma \ref{extract}.
 Hence $\delta$ contains intransitive maximal subgroups containing $X, Z$. Now $Z,X$ belong, respectively,
only to the intransitive subgroups in $\mathcal{P}_Z$ and in $\mathcal{P}_X,$ where
$$
\mathcal{P}_Z:=\{P\in \mathcal{P}: Z\in P\}=\{P_{3},P_{q},P_{q+3}\}
$$
and
$$
\mathcal{P}_X:=\{P\in \mathcal{P}: X\in P\}=\{P_{10},P_{4q},P_{4q+10}\}.
$$

Since  $\mathcal{P}_Z\cap \mathcal{P}_X=\varnothing,$ we need two additional intransitive components
$K_1,K_2$ in $\delta$ to cover both $Z$ and $X$, where  $K_1\in \mathcal{P}_Z, K_2\in \mathcal{P}_X.$
We now have that $\delta$ contains the subset
$$
\delta':= \{P_x, I_3,I_5, I_q, K_1, K_2 \ :\  x\in\mathfrak{A}\}
$$
of size $4q+1$. Assume, for a contradiction, that $\delta=\delta'$.  To reach a final contradiction,
consider again the type $U$ and recall that $U\notin P$ for all $P\in \mathcal{P}_\mathfrak{A}\cup
\{I_3,I_5,I_q\}$, and it is also immediately checked that $U\notin P$ for all
$P\in \mathcal{P}_Z\cup \mathcal{P}_X.$

\hspace{12cm}$\square$

\medskip

We show now that there are infinitely many primes $q$ satisfying the conditions of Theorem~\ref{thm1},
thus confirming Conjecture \ref{conjecture-sym} for a new infinite family of odd integers.

\begin{corollary}\label{support} There exist infinitely many primes $q$ such that $\gamma(S_{15q})=4q+2=g(15q).$

\end{corollary}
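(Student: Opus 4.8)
The plan is to prove Corollary~\ref{support} by combining Theorem~\ref{thm1}(a) with a standard result from analytic number theory on primes in arithmetic progressions. By Theorem~\ref{thm1}(a), whenever a prime $q$ satisfies $q\equiv 2\pmod{15}$ and $q\not\equiv 12\pmod{13}$, we already have $\gamma(S_{15q})=4q+2=g(15q)$. So the entire task reduces to showing that there are infinitely many primes satisfying these two congruence conditions simultaneously. Thus no further group theory is needed; the corollary is purely a counting statement about primes.

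First I would invoke Dirichlet's theorem on primes in arithmetic progressions to guarantee infinitely many primes in the residue class $2$ modulo $15$ (note $\gcd(2,15)=1$, so the hypothesis of Dirichlet's theorem is met). The subtlety is the second condition $q\not\equiv 12\pmod{13}$, which must hold \emph{in addition}. The cleanest way to handle both at once is to combine the two moduli via the Chinese Remainder Theorem: since $15$ and $13$ are coprime, I would fix a target residue $a$ modulo $15\cdot 13=195$ with $a\equiv 2\pmod{15}$ and $a\not\equiv 12\pmod{13}$, for example by choosing any residue $b\pmod{13}$ with $b\neq 12$ (say $b=0$ is ruled out since we need $\gcd(a,195)=1$; instead pick $b=1$) and taking the unique $a\pmod{195}$ with $a\equiv 2\pmod{15}$ and $a\equiv 1\pmod{13}$. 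Then $\gcd(a,195)=1$, so Dirichlet's theorem applied to the progression $a\pmod{195}$ yields infinitely many primes $q\equiv a\pmod{195}$, each of which automatically satisfies both required congruences.

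I would then verify that every such prime $q$ is automatically odd and at least $17$ for all but finitely many of them (since $q\equiv 2\pmod{15}$ forces $q$ odd once $q>2$, and infinitely many such $q$ exceed $17$), so that the hypotheses of Theorem~\ref{thm1}(a) are genuinely satisfied. Applying Theorem~\ref{thm1}(a) to this infinite family gives $\gamma(S_{15q})=4q+2=g(15q)$ for infinitely many primes $q$, which is exactly the assertion.

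The argument has essentially no obstacle: the only point requiring minimal care is the bookkeeping in the CRT step, namely selecting a residue modulo $195$ that meets both congruence constraints while remaining coprime to $195$ (so that Dirichlet applies). Once that residue is fixed, the conclusion is immediate. In the write-up I would simply state the chosen residue class explicitly and cite Dirichlet's theorem, keeping the proof to a few lines.
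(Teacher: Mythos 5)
Your proposal is correct and is essentially identical to the paper's proof: the paper also reduces to a single residue class modulo $195=15\cdot 13$ and invokes Dirichlet's theorem, simply choosing the class $q\equiv 2\pmod{195}$ (which automatically gives $q\equiv 2\pmod{15}$ and $q\equiv 2\not\equiv 12\pmod{13}$) rather than performing the CRT selection explicitly. No further comparison is needed.
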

\begin{proof} It is enough to observe that, by the previous result,  we have $\gamma(S_{15q})=4q+2$
for all $q$ primes with $q\equiv 2 \pmod{195},$ where $195=15\cdot 13.$
Since $2$ and $195$ are coprime, the famous Theorem of Dirichlet on primes in arithmetic progressions assures that
there are infinitely many primes of the form $q=2+195\,k,$ with
$k\in\mathbb{N}$.
\end{proof}

\section{Even degrees}

In this section we discuss Conjecture \ref{conjecture-alt}. First we justify the exclusion of $n=12$.

\begin{proposition}\label{12}
$\gamma(A_{12})=3$, and $\gamma(S_{12})=g(12)=4.$
\end{proposition}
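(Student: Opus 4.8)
The plan is to establish the two equalities separately, using the upper bound machinery already available and then exhibiting explicit minimal basic sets together with lower-bound arguments. For $S_{12}$, Proposition~\ref{upper} gives $\gamma(S_{12})\leq g(12)=4$ directly, since $12=2^2\cdot 3$ has $r=2$, $p_1=2$, $p_2=3$, so $g(12)=\frac{12}{2}\cdot\frac{1}{2}\cdot\frac{2}{3}+2=2+2=4$. For the lower bound $\gamma(S_{12})\geq 4$, I would argue along the lines of Proposition~\ref{conjectures}: a minimal basic set $\delta$ of maximal subgroups must cover every partition of $12$. The intransitive components that are forced are those $P_x$ with $\gcd(x,6)=1$, namely $x\in\{1,5\}$ (since $x<6$), giving two intransitive subgroups. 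Then the types $T_1=[2,10]$ and $T_2=[3,9]$ are not covered by these and, since $12$ is not a prime power and the relevant primitive groups (checked via the list in~\cite[Theorem 3.3]{M}) contain no such types, force two further imprimitive components. This yields $|\delta|\geq 4$, so $\gamma(S_{12})=4$.

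For $A_{12}$ the situation is genuinely different, and the heart of the matter is producing a basic set of size only $3$, which is why $12$ is an exception to Conjecture~\ref{conjecture-alt} (where the formula would predict $g(12)=4$). Here I would exhibit three explicit maximal subgroups of $A_{12}$ whose conjugates cover all even permutations, exploiting the rich supply of transitive and primitive maximal subgroups available in low degree. Natural candidates include the intransitive subgroup $(S_5\times S_7)\cap A_{12}$, an imprimitive subgroup such as $(S_2\wr S_6)\cap A_{12}$ or $(S_6\wr S_2)\cap A_{12}$, and a primitive subgroup of degree $12$ (for instance one containing $M_{12}$ or a product-action group) chosen so that together they capture every cycle type of even permutations. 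The verification is a finite check: list all partitions of $12$ corresponding to even permutations and confirm each type belongs to at least one of the three chosen subgroups.

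The lower bound $\gamma(A_{12})\geq 3$ is easy, since a normal covering requires $l\geq 2$ basic components by definition, and one can rule out $\gamma(A_{12})=2$ by a short argument: no two proper subgroups of $A_{12}$ (up to conjugacy) can between them contain every cycle type, which follows by producing two incompatible types—for example an element of order $11$ (an $11$-cycle, lying only in specific primitive or intransitive subgroups) together with a type forcing a disjoint family—so that no pair suffices. Thus $\gamma(A_{12})=3$.

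The main obstacle I expect is the verification for $A_{12}$, namely selecting three subgroups that genuinely cover all even cycle types and then checking coverage exhaustively; this is where the \texttt{Magma} computation mentioned in the introduction is most naturally deployed. The subtlety is that the gain from $4$ to $3$ relies on a primitive subgroup of degree $12$ absorbing types that, for general $n$, no primitive group can handle (cf.\ Lemma~\ref{extract} and the fixed-point bounds of Lemma~\ref{GM}); confirming that such a primitive subgroup exists in this small degree and contributes the needed types is the delicate point, and it is precisely the failure of the primitive-exclusion arguments at $n=12$ that makes this case exceptional.
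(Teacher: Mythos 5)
There is a genuine gap in your lower-bound argument for $S_{12}$. You assert that the intransitive components $P_1$ and $P_5$ are ``forced'', but this is exactly the unproved content of Conjecture~\ref{conjecture-intr-sa}, and for $n=12$ the assertion is not even true as you state it: since $12$ is even, the types $[1,11]$ and $[5,7]$ consist of cycles of odd length and are therefore \emph{even} permutations, so both lie in $A_{12}$; moreover $[1,11]$ lies in the primitive subgroups $M_{12}$, $M_{11}$ and ${\rm PSL}(2,11)$ of degree $12$ (Lemma~\ref{extract} gives no obstruction here because $11>n-5=7$). The types $[2,10]$ and $[3,9]$ are likewise even. Hence nothing in your argument excludes a hypothetical basic set of size $3$ containing $A_{12}$, or one using a primitive component to absorb $[1,11]$ --- and this is precisely the crux of the $S_{12}$ case. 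The paper's proof first rules out $A_{12}\in\delta$ (citing the proof of \cite[Corollary 7.10]{BP}), then forces $S_5\times S_7\in\delta$ via \cite[Lemma 5.2]{BP}, uses the types $[2,10],[4,8],[3,9]$ to show that the remaining two components are neither primitive nor point-stabilising, and finally derives a contradiction from the type $[1,11]$. Without such an exclusion step the inequality $|\delta|\geq4$ does not follow.

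For $A_{12}$ your overall strategy (exhibit a basic set of size $3$, rule out size $2$) matches the paper's, but you never commit to a working triple, and the specific candidates you float would fail: neither $(S_2\wr S_6)\cap A_{12}$ nor $(S_6\wr S_2)\cap A_{12}$ contains the type $[3,9]$ (each block would have to meet the $9$-orbit in a constant number of points dividing $9$ and at most the block size, which forces more blocks than are available), $[3,9]\notin S_5\times S_7$, and $M_{12}$ has no element of order $9$. The paper's basic set is $\{M_{12},\,(S_3\wr S_4)\cap A_{12},\,(S_5\times S_7)\cap A_{12}\}$; the imprimitive component with blocks of size $3$ is chosen exactly to catch such types. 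Finally, the bound $\gamma(A_{12})\geq3$ is quoted in the paper from \cite{BBH,BU}; your sketch via two ``incompatible'' types is a plausible idea but is not yet an argument.
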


\begin{proof}
We see that $\delta=\{M_{12}, [S_3\wr S_4]\cap A_{12}, [S_5\times S_7]\cap A_{12}\}$ is a basic set for $A_{12}$  by checking that all the types of permutations in
$A_{12}$ are contained in one of the components. Since $\gamma(A_{12})\neq 2$, by \cite{BBH,BU}, we deduce that $\gamma(A_{12})=3.$

For $S_{12}$, it follows from \cite{BBH} that $\gamma(S_{12})\geq3$, and from Proposition~\ref{upper} that $\gamma(S_{12})\leq g(12)=4$. Suppose for a contradiction that $\delta$ is a basic set for $S_{12}$ consisting of maximal subgroups and that $|\delta|=3$. The proof of \cite[Corollary 7.10]{BP} correctly shows that $A_{12}\not\in\delta$, and then by \cite[Lemma 5.2]{BP} we deduce that $S_5\times S_7\in\delta$.
Since no permutation of type $[2,10], [4,8]$, or $[3,9]$ belongs to a proper primitive 
subgroup, we conclude that neither of the other two subgroups $H,K$ of $\delta$ is 
primitive, and neither $H$ nor $K$ fixes a point. This implies that no subgroup in $\delta$ contains the type $[1,11]$. 
\end{proof}

Now we prove Theorem~\ref{thm1}~(b), giving extra confirmation for  Conjecture \ref{conjecture-alt}.

\bigskip\noindent
\textit{Proof of Theorem~\ref{thm1}~(b).}\quad
Let $n=6q$, with $q\geq 11$ a prime. If $U\leq S_n$, we write for simplicity,
$\overline{U}=U\cap A_n.$ For each $\mathfrak{X}\subseteq \mathbb{N}\cap [1,n/2),$  use the
notation $\overline{\mathcal{P}}_{\mathfrak{X}}=\{\overline{P}: P\in \mathcal{P}_{\mathfrak{X}}\}.$
Let $\delta$ be  a minimal basic set for $A_{n}$ with maximal components. By Proposition \ref{upper} we
know that $|\delta|\leq q+2.$ Our aim is to show that  $|\delta|\geq q+2.$

To do that we first consider the set $\mathfrak{A}'=\mathfrak{A}\setminus \{1\}$ (see (\ref{Anu})) and the
corresponding set of partitions $\mathcal{A}'=\mathcal{A}\setminus \{[1,n-1]\}$ and intransitive
components $\overline{\mathcal{P}}_{\mathfrak{A}'}=\overline{\mathcal{P}}_{\mathfrak{A}}\setminus
\{\overline{P}_1\}$. Moreover, we consider the following subsets  of $\mathcal{T},$ corresponding to
subsets of $\mathfrak{T}:=\mathbb{N}\cap [1,n/2)$ characterized by a suitable coprimality condition:
  \[
\begin{array}{lll}
\mathcal{E}=\{T_x\in \mathcal{T} : x\in \mathfrak{E}\},& &\mathfrak{E}=\{x\in \mathfrak{T}\ : \ \gcd(x,n)=2\} ,  \\
\mathcal{F}=\{T_x\in \mathcal{T} : x\in \mathfrak{F}\},& &\mathfrak{F}=\{x\in \mathfrak{T}\ : \ \gcd(x,n)=3\},   \\
\mathcal{G}=\{T_x\in \mathcal{T}: x\in \mathfrak{G}\},&&\mathfrak{G}=\{x\in \mathfrak{T}\ : \ \gcd(x,n)=q\}.\\
\end{array}\]
It follows from
Proposition \ref{arithm} ii)a) and Remark \ref{bijection} that:
\[
\begin{array}{lll}  | \mathcal{A}'|=|\overline{\mathcal{P}}_{\mathfrak{A}'}|=q-2, & &|\mathcal{E}|=| \mathfrak{E}|=|\overline{\mathcal{P}}_\mathfrak{E}|=q-1,\\
 | \mathcal{F}|=| \mathfrak{F}|=|\overline{\mathcal{P}}_\mathfrak{F}|=\frac{q-1}{2},& & | \mathcal{G}|=| \mathfrak{G}|=|\overline{\mathcal{P}}_\mathfrak{G}|=2.\\
\end{array}
\]
By \cite[Theorem 3.3]{M},  no permutation having type in $\mathcal{T}\setminus\{[1,n-1]\}$ belongs
to a primitive subgroup. In particular this holds for all the types in $\mathcal{A}',
\mathcal{E}, \mathcal{F},\mathcal{G}.$ Moreover, by  Section \ref{case k=2}, it follows
immediately that the permutations in $\mathcal{A}'$ cannot belong to an imprimitive component
and so $\delta\supseteq \mathcal{P}_{\mathfrak{A}'},$ which gives $|\delta|\geq q-2.$ This
means that we need to force only $4$ further components to conclude.
Consider permutations having type in $\mathcal{E}$. Since these do not belong to any
subgroup in $\mathcal{P}_{\mathfrak{A}'}$, we need additional components to contain them and
those components must be intransitive or imprimitive. If $\delta$ contains neither
$\overline{S_2\wr S_{3q}}$ nor $\overline{S_{3q}\wr S_{2}},$ then
we need $q-1>4$ additional intransitive components and thus $|\delta|>q+2$, a contradiction.
So one of $\overline{S_2\wr S_{3q}}$ and $\overline{S_{3q}\wr S_{2}},$ belongs to $\delta$.
Denote this component by $I_2$ and note that we then need to force $3$ further components.

Similarly permutations having type in $\mathcal{F}$ do not belong to any of the subgroups in
$\mathcal{P}_{\mathfrak{A}'}$ or to $I_2$. If $\delta$ contains neither
$\overline{S_3\wr S_{2q}}$ nor $\overline{S_{2q}\wr S_{3}}$, then we need at least
$\frac{q-1}{2}>3$ (since $q\geq11$)  additional intransitive components to cover these permutations,
which is a contradiction. Thus one of $\overline{S_3\wr S_{2q}}$ or $\overline{S_{2q}\wr S_{3}}$,
denoted $I_3$, lies in $\delta$, and we need to force just $2$ more components.


Permutations of type in $\mathcal{G}$ do not belong to $I_2, I_3$, or any subgroup in
$\mathcal{P}_{\mathfrak{A}'}$. If $\delta$ contains neither $\overline{S_q\wr S_{6}}$
nor $\overline{S_{6}\wr S_{q}}$, then we need two additional intransitive components
to cover these permutations and the proof is complete. Thus we may assume that $\delta$ contains
one of $\overline{S_q\wr S_{15}}$ or $\overline{S_{15}\wr S_{q}},$ denoted $I_q$.
Finally, since no component in the subset $\mathcal{P}_{\mathfrak{A}'}\cup \{I_2,I_3,I_q\}$
of $\delta$ contains  the type $[1,n-1]$, it follows that we need an additional component for it,
and hence $|\delta|\geq q+2$.

\hspace{12cm}$\square$

\subsection{The symmetric group} 
Our tools for the symmetric group $S_n$ with $n$ even  seem too weak. We have no general argument even to show that at least the intransitive subgroups of $\mathcal{P}_{\mathfrak{A}}$ belong to any minimal basic set of maximal subgroups for $S_n$, even though computational evidence  with {\tt Magma} suggests that this is the case. In particular our computations gave no counterexamples to Conjecture \ref{conjecture-sym}.
For $n$ even, Conjectures \ref{conjecture-sym} and \ref{conjecture-alt} are linked: if Conjecture \ref{conjecture-alt} is true, then either also Conjecture \ref{conjecture-sym} is true, or $A_n$ belongs to each minimal basic set of $S_n.$ However, in our experience  $A_n$ does not play a significant role in the minimal basic sets of $S_n.$ As an example, a detailed argument of more than a page is needed to confirm that $\gamma(A_{30})=\gamma(S_{30})=g(30)=7$, and for space reasons we have not included it here.

\section{
Aknowledgements}
We thank F. Luca for helpful advice on the arithmetic section of the paper.

\end{document}